\newtheorem{definition}{Definition}[section]
\newtheorem{theorem}{Theorem}[section]
\newtheorem{lemma}{Lemma}[section]
\newtheorem{proposition}{Proposition}[section]
\newtheorem{remark}{Remark}[section]
\newtheorem*{maintheorem*}{Main Theorem}
\numberwithin{equation}{section}
\renewcommand{\i}{\ifmmode\mathit{\mathchar"7010 }\else\char"10 \fi}
\renewcommand{\j}{\ifmmode\mathit{\mathchar"7011 }\else\char"11 \fi}
\newcommand{\R}{\mathbb{R}}
\renewcommand{\d}{\mathrm{d}}
\DeclareMathOperator*{\supp}{supp}
\renewcommand{\div}{\operatorname{div}}
\newcommand{\curl}{\operatorname{curl}}
\renewcommand{\d}{\mathrm{d}}
\newcommand{\dd}{\mathrm{\,d}}
\newcommand{\DD}{\Delta}
\newcommand{\1}{\mathds{1}}
\DeclareMathOperator*{\dvg}{\mbox{div}}
\def\Xint#1{\mathchoice
{\XXint\displaystyle\textstyle{#1}}%
{\XXint\textstyle\scriptstyle{#1}}%
{\XXint\scriptstyle\scriptscriptstyle{#1}}%
{\XXint\scriptscriptstyle\scriptscriptstyle{#1}}%
\!\int}
\def\XXint#1#2#3{{\setbox0=\hbox{$#1{#2#3}{\int}$ }
\vcenter{\hbox{$#2#3$ }}\kern-.6\wd0}}
\def\avint{\Xint-}
\title[Liouville theorems for 2-D MHD]{On Liouville-type theorems for the 2D stationary MHD equations}
\subjclass[2020]{35Q35, 76W05, 35B65, 35B53.}
\keywords{Liouville theorem; incompressible magneto-hydrodynamics (MHD)}
\author[N. De Nitti]{Nicola De Nitti}
\address[N. De Nitti]{Friedrich-Alexander-Universität Erlangen-Nürnberg, Department of Mathematics, Chair of Applied Analysis (Alexander von Humboldt Professorship), Cauerstr. 11, 91058 Erlangen, Germany.}
\email{nicola.de.nitti@fau.de}
\author[F. Hounkpe]{Francis Hounkpe}
\address[F. Hounkpe]{Mathematical Institute, University of Oxford, Radcliffe Observatory Quarter, Woodstock Road, Oxford, OX2 6GG, UK.}
\email{francishounkpe@gmail.com}
\author[S. Schulz]{Simon Schulz}
\address[S. Schulz]{Faculty of Mathematics, University of Cambridge, Wilberforce Road, Cambridge, CB3 0WA, UK.}
\email{sms79@cam.ac.uk}
\begin{document}

\begin{abstract}
We establish new Liouville-type theorems for the two-dimensional stationary magneto-hydrodynamic incompressible system assuming that the velocity and magnetic field have bounded Dirichlet integral. The key tool in our proof is observing that the stream function associated to the magnetic field satisfies a simple drift-diffusion equation for which a maximum principle is available.
\end{abstract}

\maketitle

\section{Introduction}\label{sec:introduction}
\subsection{Liouville-type results in fluid mechanics}\label{ssec:intro_liouville}
We are interested in studying Liouville-type properties for the 2D incompressible stationary \emph{Magneto-Hydrodynamic (MHD)} system on the whole plane $\R^2$:
\begin{equation}\label{eq:mhd}
    \left\{
    \begin{gathered}
    -\DD u + u\cdot\nabla u + \nabla \pi = b\cdot \nabla b,\quad \dvg u = 0\\
    -\DD b + u\cdot\nabla b - b\cdot\nabla u = 0
    \end{gathered}
    \right.\quad\mbox{in }\R^2
\end{equation}
In system \eqref{eq:mhd}, $u,b:\R^2 \to \R^2$ stand for the velocity and the magnetic field respectively, and $\pi:\R^2 \to \R$ is the pressure; this system and its time-dependent analogue are used to model electrically conductive fluids such as plasma, liquid metals, electrolytes etc. For more physical background and mathematical theory, we refer to \cite{Schn09} and the references contained therein. Here, unlike what is done in the literature, we do not assume that the magnetic field $b$ is divergence free (which expresses the \say{endless} nature of magnetic streamlines); we were able to show instead, that this property can be obtained directly from the system \eqref{eq:mhd} (see end of this section for further discussion).

In this work, we are interested in Liouville-type results for solutions to \eqref{eq:mhd} with finite Dirichlet energy, i.e.,
\begin{equation}\label{setup}
    \int_{\R^2}\left(|\nabla u|^2 + |\nabla b|^2\right)\dd x <\infty.
\end{equation}

When $b\equiv 0$ in system \eqref{eq:mhd}, i.e., in the case of the incompressible stationary \emph{Navier-Stokes} system
\begin{equation}\label{eq:stationary_NS}
\left\{
\begin{gathered}
- \Delta u + u \cdot \nabla u + \nabla \pi = 0, \\
\div u = 0,
\end{gathered}
\right.
\end{equation}
the question of the triviality of $u$ under the above condition was first solved in \cite{Gilb78}; this triviality was also established later in \cite{Koch09} provided that the velocity $u$ is just bounded. Those two results rely heavily on the fact that one has a nice equation for the vorticity $w := \partial_1 u_{2} - \partial_2 u_{1}$, i.e.,
\begin{equation}\label{eq:vorticity ns}
- \DD w + u\cdot\nabla w = 0.
\end{equation}
In higher dimensions, we refer to the works \cite{Chae16_3,Galdi11,Ser16,Ser19}.

%Throughout the years, progress has been made towards understanding the Liouville-type properties for the Navier--Stokes in $\R^2$ and $\R^3$. Due to the similar structure between the Navier--Stokes equations and the MHD system, some of these Liouville-type properties also apply to this latter system of equations.
Note that, when the magnetic field $b$ is not null, the equation \eqref{eq:vorticity ns} for the vorticity is no longer available, which adds to the difficulty of the Liouville problem for the MHD system. Indeed, the aforementioned Liouville-type results for the incompressible stationary Navier-Stokes system still remain unsolved for the 2D MHD system.

In recent years, many mathematicians attempted to bring a complete understanding to the Liouville problem for the MHD system; we cite the works of \cite{Chae16,Chae18,Schulz19,Zhang15} (and the references contained therein) for interesting results. We also point out the recent contribution of Wang and Wang in \cite{Wang19}, where they proved a Liouville theorem for the system in question under some smallness condition on the $L_1$-norm of $b$.

Our work is more in the direction of this latter result, \cite{Wang19}. The novelty of the present paper is that we uncover a nice drift-diffusion equation for the stream function associated to the magnetic field, and for which a maximum principle is available. This allows us to bring new insights to this problem and to improve existing results.

As mentioned above, it is very common to see in the literature an added incompressibility condition \say{$\div b = 0$} on the magnetic field; however, by doing so, the system becomes, from a mathematical point of view, over-determined. One of the key observations in this work is that under condition \eqref{setup}, the incompressibility of the magnetic field $b$ can be derived directly from the system; this is done in Proposition \ref{prop:stream} below. %\comme{maybe move this to the beginning of the section, when the issue was first mentioned?}

%In particular, for \eqref{eq:stationary_NS}, the Liouville property  has been shown to hold under the additional assumption  $u\in L^{\frac{9}{2}}(\mathbb{R}^3)$ (see \cite{MR2808162}) or $\Delta v \in L^{\frac{6}{5}}(\mathbb{R}^3)$ (see [ChaeWolf]) or $u\in \mathrm{BMO}^{-1} (\mathbb R^3)$ (see \cite{MR3538409}).

%Analogously, for the MHD system, in \cite{MR3543547}, Chae proved that if a smooth solution of the stationary MHD equations is bounded in $L^3(\R^3)$ and has finite Dirichlet integral, then it is identically zero; later, in \cite{MR3444413}, Zhang-Yang-Qiu proved that if a smooth solution of the stationary MHD equations is bounded in $L^{9/2}(\R^3)$ and has finite Dirichlet integral, then it is also identically zero. In \cite{Schulz2019}, the third co-author extended the above results to the case $u,b \in L^6(\R^3) \cap BMO^{-1}(\R^3)$. In \cite{MR4017112}, the authors considered the 2-D MHD equations and proved a Liouville-type theorem under suitable smallmess conditions for the magnetic field.

\subsection{Preliminaries and notation}
\label{ssec:prelim}
In this section, we recall the definition of various function spaces that will be central to our analysis and explain the notations we use throughout this work.

To begin with, we say that a function $f$ belongs to the space $BMO(\R^2)$ if the following quantity is finite:
\[
\|f\|_{BMO(\R^2)} := \sup\left\{ \frac{1}{|B(0,r)|}\int_{B(x_0,r)} |f-[f]_{x_0,r}| \dd x : B(x_0,r) \subset \R^2 \right\},
\]
%with $[f]_{x_0,r}$ the average
where we denote the average of $f$ over the ball $B(x_0,r)$ centred at $x_0$ and with radius $r$ as
\[
[f]_{x_0,R} := \avint_{B(x_0,R)}f(x) \dd x \left(=\frac{1}{|B(R)|}\int_{B(x_0,R)} f(x) \dd x\right),\quad [f]_{,R} := [f]_{0,R};
\]
more generally, the average of $f$ over a bounded subset $\Omega$ is denoted by $[f]_{\Omega} := \frac{1}{|\Omega|}\int_\Omega f \, \d x$.
See for instance \cite{grafakos,Stein93} for useful properties of the BMO space. In what follows, we denote the unit ball by $B$ and the ball with radius $R$ centred at the origin by $B(R)$.
%Additionally, the average of $f$ over $B(R)$ will always be denoted by $[f]_{,R}$.

We say that a vector field $v$ belongs to the space $BMO^{-1}(\R^2)$, if there exists a matrix $F\in BMO(\R^2;\R^{2 \times 2})$ such that $v = \dvg F$, and this is the only property of this function space we need in this work; see for instance \cite{Koch01} for discussions regarding this space.

Next, we say that a function $f$ is in the Hardy space $\mathcal{H}^1(\R^2)$ if the following quantity is finite:
\[
\|f\|_{\mathcal{H}^1(\R^2)} := \|\mathcal{M}f\|_{L_1(\R^2)},
\]
where $\mathcal{M}f(x) := \sup_{\phi \in \mathcal{F}}\sup_{t>0} |(\phi_t\star f)(x)|$, with
\[
\mathcal{F} := \left\{ \phi \in C^{\infty}_0(B): |\nabla \phi| \leq 1 \right\}
\]
and $\phi_t(x) := t^{-n}\phi(x/t)$. See also \cite{grafakos,Stein93} and \cite{cmsHardy} for important results concerning this function space.

%For various mean values of functions, we write:

We use $c$ or $C$ to denote an absolute constant and write $C(A,B,\ldots)$ when the constant depends on the parameters $A,B,\ldots$. This constant may change from line to line.

Finally, the notation `$\perp$' will be used to refer to the orthogonal of a vector field, i.e., for any $v=(v_1,v_2) \in \mathbb{R}^2$, its orthogonal is written $v^\perp = (v_2,-v_1)$. Likewise, for any scalar function $\phi$, we use the notation $\nabla^\perp \phi = (\partial_2 \phi, -\partial_1 \phi)$ to refer to the orthogonal of its gradient. Furthermore, since we constrain ourselves to the two-dimensional case, the quantity $\curl v$ for any vector field is a scalar function given by $\curl v := \nabla \cdot v^\perp  = \partial_1 v_{2} - \partial_2 v_{1}$.

\subsection{Outline of the paper}
\label{ssec:summary}

%The major difficulty in establishing the Liouville property for \eqref{eq:mhd} is the absence of a ``good equation'' for a suitable quantity of the system. In contrast, for the 2-D Navier-Stokes system \eqref{eq:stationary_NS}, the vorticity -- i.e. $w := \curl u$ satisfies the elliptic equation
%\begin{align*}
%\Delta w - u \cdot \nabla w = 0,
%\end{align*}
%which satisfies the maximum principle. This observation is the key to the proof of the Liouville property in \cite{MR501907}. In the present paper, we base our approach on the following remark: in dimension $2$, the fact that $\dvg b = 0$ allows us to introduce a stream function $\psi$ for $b$ such that $b = (\partial_2 \psi,-\partial_1 \psi)^T$; moreover, we can rewrite the last equation in \eqref{eq:mhd} as follows
%	\[
%	\nabla^{\perp}(-\DD \psi + u\cdot \nabla \psi) = 0,
%	\]
%	here $F^{\perp}:= (F_2,-F_1)^T$ with $F\in \R^2$. Therefore,
%	\begin{equation}\label{eq:stream}
%	-\DD \psi + u\cdot \nabla \psi= c
%	\end{equation}
%for some constant $c$.

The paper is organised as follows. In Section \ref{sec:main}, we state our main Liouville-type theorems precisely. The key idea for proving our results is the observation that there exists a stream function $\psi$ for $b$ such that $b = \nabla^\perp \psi$ and the last equation in \eqref{eq:mhd} can be rewritten as
	\[
	\nabla^{\perp}(-\DD \psi + u\cdot \nabla \psi) = 0.
	\]
In Section \ref{sec:stream}, we prove the incompressibility and existence of this stream function associated to the magnetic field and further properties. In Section \ref{sec:proof_liouville}, we use these tools to prove our main results. Finally, in Appendix \ref{sec:appendix}, we collect some technical lemmas used at various points in the paper.

\section{Main results}\label{sec:main}
We use the following definition of weak solution for \eqref{eq:mhd}.
\begin{definition}[Weak solutions]\label{def:solutions}
	We say that the pair $(u, b) \in \mathcal{D}'(\R^2)\times \mathcal{D}'(\R^2)$ (here $\mathcal{D}'(\R^2)$ denotes the space of distributions on $\R^2$) is a weak solution to the 2D MHD equation \eqref{eq:mhd} in a domain $\Omega \subset \mathbb{R}^{2}$ provided that:
	
	\begin{enumerate}
		\item $\nabla u, \nabla b \in L_{2,loc}(\Omega)$;
		\item $\operatorname{div} u=0$ in the sense of distributions;
		\item the couple \((u, b)\) satisfies
	\[
	\begin{gathered}
	\int_{\Omega} \nabla u \cdot \nabla \varphi \dd  x-\int_{\Omega}u\otimes u : \nabla \varphi \dd x=\int_{\Omega}(b \cdot \nabla b) \cdot \varphi \dd x, \\
	\int_{\Omega} \nabla b \cdot \nabla \varphi \dd x -\int_{\Omega} b\otimes u : \nabla \varphi \dd x=\int_{\Omega}(b \cdot \nabla u) \cdot \varphi \dd x
	\end{gathered}
	\]
	for all \(\varphi \in C_{c}^{\infty}(\Omega)\) with \(\varphi=\left(\varphi_{1}, \varphi_{2}\right)\) and \(\operatorname{div} \varphi=0\).
	\end{enumerate}
\end{definition}

Note that the requirement that $\nabla u , \nabla b \in L_{2,loc}(\mathbb{R}^2)$ implies $u,b\in L_{2,loc}(\mathbb{R}^2)$ (\textit{cf.}~\cite[Chapter 1, Theorem 1.1]{Ser14}), so the previous definition is reasonable.

\begin{remark}[Regularity of weak solutions]
Standard regularity results  (see for instance \cite[Chapter 3.2]{Ser14}) yield that any pair $(u,b) \in \mathcal{D}'(\R^2)\times \mathcal{D}'(\R^2)$ of weak solutions to \eqref{eq:mhd}, say in $\R^2$, is actually smooth, i.e.,
\[
u,b \in C^{\infty}(\R^2).
\]
%As a result, weak solutions in the sense of Definition \ref{def:solutions} satisfy the assumptions in \eqref{setup}.
\end{remark}

Our first main result is stated as follows.
\begin{theorem}\label{thm:main}
Let $u,b$ be weak solutions to system \eqref{eq:mhd} in $\R^2$ such that condition \eqref{setup} holds. Assume in addition that one of the following conditions holds:
\begin{enumerate}
    \item $b\in L_2(\R^2)$;
    \item $u\in L_p(\R^2)$ and $b\in L_q(\R^2)$ with $p,q \in ]2,\infty[$ such that
    \begin{equation}\label{B.4}
        \frac{2}{\max(p,q)} + \frac{1}{p} \geq \frac{1}{2};
    \end{equation}
    \item  $u\in L_p(\R^2)$ and $b\in L_q(\R^2)$ with $1/p + 1/q = 1$ and $p \in [1,2[$.
\end{enumerate}
Then, $u$ and $b$ are constants.
\end{theorem}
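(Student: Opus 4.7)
The plan is to exploit the stream-function reformulation of the magnetic equation outlined in Section \ref{ssec:summary}. Proposition \ref{prop:stream} will supply the following three ingredients: $b$ is in fact divergence free, it admits a scalar potential $\psi$ with $b = \nabla^\perp \psi$, and the potential satisfies the drift-diffusion identity
\[
-\Delta \psi + u \cdot \nabla \psi = C
\]
for some constant $C \in \R$ (since $\nabla^\perp F \equiv 0$ on the connected plane $\R^2$ forces $F$ to be constant). A short preliminary argument -- integrating this identity over $B(r)$, applying the divergence theorem together with $\div u = 0$, and extracting via Fubini a sequence of radii $r_k \to \infty$ along which the boundary integrals of $|\nabla\psi|$ and $|\psi u|$ remain bounded while the bulk term grows as $C\pi r_k^2$ -- then forces $C = 0$.

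The core of the proof is a Caccioppoli-type estimate on $-\Delta \psi + u \cdot \nabla \psi = 0$. Let $\chi_R \in C_c^\infty(B(2R))$ be a standard cutoff equal to $1$ on $B(R)$ with $|\nabla \chi_R| \leq C/R$, write $A_R := B(2R) \setminus B(R)$, and set $k_R := [\psi]_{A_R}$. Testing against $(\psi - k_R)\chi_R^2$, integrating by parts, and using $\div u = 0$ to rewrite the drift term as $\tfrac12 u \cdot \nabla(\psi - k_R)^2$ before transferring one more derivative yields
\[
\int_{\R^2} |\nabla \psi|^2 \chi_R^2 = -2\int_{A_R} (\psi - k_R) \chi_R \nabla \chi_R \cdot \nabla \psi + \int_{A_R} (\psi - k_R)^2 \, u \cdot \chi_R \nabla \chi_R.
\]
Cauchy--Schwarz combined with the Sobolev--Poincaré inequality $\|\psi - k_R\|_{L^2(A_R)} \leq C R \|\nabla \psi\|_{L^2(A_R)}$ bounds the first \emph{diffusive remainder} by $C\|\nabla\psi\|_{L^2(\R^2)} \|\nabla\psi\|_{L^2(A_R)}$, which vanishes as $R \to \infty$ by \eqref{setup}.

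The main obstacle is the second \emph{drift remainder}, whose treatment depends on the regime. Under $(1)$, $b \in L^2 \cap W^{1,2}$ gives $b \in L^s(\R^2)$ for every $s < \infty$ by the 2D Sobolev embedding, and together with a stream-function decomposition $u = \nabla^\perp \phi$ with $\phi \in BMO(\R^2)$ one exploits the div--curl structure of $u \cdot \nabla \psi = \nabla^\perp \phi \cdot \nabla \psi$ via a Hardy space -- $BMO$ duality estimate in the spirit of the Coifman--Lions--Meyer--Semmes theorem. Under $(2)$ and $(3)$, a direct Hölder argument with the scaled Sobolev--Poincaré bound $\|\psi - k_R\|_{L^s(A_R)} \leq C R^{2/s} \|\nabla \psi\|_{L^2(A_R)}$ suffices: the assumption \eqref{B.4} (respectively $1/p + 1/q = 1$ with $p < 2$) is precisely what is needed so that the resulting powers of $R$ generated on the annulus are absorbed, and the global integrability guarantees that $\|u\|_{L^p(A_R)}$ and $\|b\|_{L^q(A_R)}$ vanish as $R \to \infty$.

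In all three regimes the right-hand side of the Caccioppoli identity tends to zero, so $\nabla\psi \equiv 0$ and therefore $b \equiv 0$. With the magnetic field eliminated, system \eqref{eq:mhd} reduces to the stationary Navier--Stokes equation \eqref{eq:stationary_NS} with $\nabla u \in L^2(\R^2)$, and the classical Gilbarg--Weinberger theorem \cite{Gilb78} concludes the proof by forcing $u$ to be constant as well.
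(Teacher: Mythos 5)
Your overall skeleton (stream function, drift--diffusion equation, elimination of the constant, reduction to Navier--Stokes and Gilbarg--Weinberger) matches the paper's strategy for case (1), but the central Caccioppoli estimate does not close, for a reason that propagates through all three cases. The ``diffusive remainder'' is controlled by
\[
\frac{C}{R}\,\|\psi-k_R\|_{L_2(A_R)}\,\|\nabla\psi\|_{L_2(A_R)} \le C\,\|\nabla\psi\|_{L_2(A_R)}^2 = C\,\|b\|_{L_2(A_R)}^2 ,
\]
and you assert that this vanishes ``by \eqref{setup}''. But \eqref{setup} controls $\nabla b$, not $\nabla\psi=-b^{\perp}$; the quantity $\|\nabla\psi\|_{L_2(\R^2)}=\|b\|_{L_2(\R^2)}$ is finite only under hypothesis (1). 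Under (2) and (3) one has $b\in L_q$ with $q>2$, and $\|b\|_{L_2(A_R)}^2\lesssim R^{2-4/q}\|b\|_{L_q(A_R)}^2$ need not tend to zero, so the very first term of your identity already fails; no choice of exponents in \eqref{B.4} rescues this, because the obstruction is $q>2$, not the drift term. This is precisely why the paper does not run an energy estimate on $\psi$ in cases (2)--(3): case (2) is handled through the Bernoulli-type identity $|\nabla u|^2+|\curl b|^2=\Delta\tfrac{|u|^2}{2}-u\cdot\nabla Q$ together with Calder\'on--Zygmund bounds on the pressure, and case (3) through the div--curl lemma (resp.\ the Wente-type Lemma \ref{lemma:construction} when $p=1$) to build an auxiliary potential $v$ with $\nabla v\in L_2(\R^2)$ and upgrade to $b\in L_2(\R^2)$ before falling back on case (1).

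Even in case (1), where the diffusive remainder does vanish, your treatment of the drift remainder rests on writing $u=\nabla^{\perp}\phi$ with $\phi\in BMO(\R^2)$. That is exactly the hypothesis $u\in BMO^{-1}(\R^2)$ of Theorem \ref{thm:main2}; it does \emph{not} follow from \eqref{setup} (a stream function of a divergence-free field with $\nabla u\in L_2$ need not lie in $BMO$ --- already a constant $u$ gives $\phi$ of linear growth). The paper circumvents the drift term altogether by invoking the maximum principle for $-\Delta\psi+u\cdot\nabla\psi=0$ and the oscillation Lemma \ref{lm:oscillation}, which yields $\sup_{B(R/2)}|\psi-\psi(0)|\le c\|b\|_{L_2(B(R)\setminus B(R/2))}\to 0$ with no structural assumption on $u$. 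Finally, your preliminary argument for $C=0$ is also too quick: $\psi u\notin L_1(\R^2)$ in general, so Fubini does not produce radii along which $\int_{\partial B(r)}|\psi u|$ stays bounded; one must subtract the circle average of $\psi$ (using $\int_{\partial B(r)}u\cdot n=0$), and even then the resulting bound genuinely uses the case hypotheses rather than \eqref{setup} alone, as in Proposition \ref{prop:c0} and Remark \ref{rem:assumptionC0}.
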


\begin{remark}
Under condition \eqref{setup}, we have $b\in BMO(\R^2)$ %(by an embedding using the Poincar\'{e}--Wirtinger inequality)
and, thanks to the interpolation between $L_p$ and $BMO$ spaces (see e.g.~\cite[Theorem 2]{Chen-Zhu05}), we can replace the first point of Theorem \ref{thm:main}
%is also true if we took
by $b\in L_p(\R^2)$, with $p\in [1,2]$.
\end{remark}

%Before moving to our second main result let us mention that the arguments we used to prove the first point in the theorem above allow us also to prove the following result.

%\begin{proposition}
%Let $u$ be a smooth solution to system \eqref{eq:stationary_NS} such that
%\[
%\nabla^2 u \in L_2(\R^2).
%\]
%Then $u$ is affine.
%\end{proposition}

Our second result addresses the case where we allow the velocity $u$ to grow or rapidly oscillate.
\begin{theorem}\label{thm:main2}
Let $u,b$ be weak solutions to system \eqref{eq:mhd} in $\R^2$ such that condition \eqref{setup} holds. Assume in addition that the following condition holds:
\[
u\in BMO^{-1}(\R^2).
\]
Then,
\begin{enumerate}
    \item if $b\in L_q(\R^2)$ with $q\in ]2,\infty]$, we have $u,b\equiv 0$;
    \item or, alternatively, if $b$ belongs also to $BMO^{-1}(\R^2)$, we have that $u,b\equiv 0$.
\end{enumerate}
\end{theorem}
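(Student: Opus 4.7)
The starting point is the stream function machinery from Section~\ref{sec:stream}: there exists $\psi:\R^2 \to \R$ with $b = \nabla^{\perp}\psi$ satisfying the drift-diffusion equation
\[
-\Delta \psi + u \cdot \nabla \psi = 0 \quad \text{in } \R^2.
\]
Since \eqref{setup} gives $\nabla \psi \in L_2(\R^2)$, a Poincaré estimate on balls shows that $\psi$ lies automatically in $BMO(\R^2)$. The plan is to exploit the additional hypothesis $u \in BMO^{-1}(\R^2)$ by using that in two dimensions any divergence-free element of $BMO^{-1}(\R^2)$ is of the form $u = \nabla^{\perp}\phi$ for some $\phi \in BMO(\R^2)$. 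The drift term can then be written as $\nabla^{\perp}\phi \cdot \nabla \psi = -\operatorname{div}(\phi \nabla^{\perp}\psi)$ (since $\operatorname{div}(\nabla^{\perp}\psi)=0$), so the PDE assumes the divergence form
\[
\operatorname{div}\bigl(\nabla \psi + \phi \nabla^{\perp}\psi\bigr) = 0.
\]

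The engine of the proof is a Caccioppoli-type estimate. Let $\zeta_R \in C_c^{\infty}(B(2R))$ be a standard cut-off with $\zeta_R \equiv 1$ on $B(R)$ and $|\nabla \zeta_R| \lesssim 1/R$. Testing the divergence-form equation against $\psi \zeta_R^2$ and exploiting the pointwise orthogonality $\nabla^{\perp}\psi \cdot \nabla \psi = 0$, I expect the drift contribution to collapse into a term of the form $\int \phi\, \nabla^{\perp}(\psi^2/2)\cdot \nabla(\zeta_R^2/2)\,\d x$. This is a null Lagrangian (Jacobian-type) structure: by the Coifman--Lions--Meyer--Semmes theorem it defines an element of the Hardy space $\mathcal{H}^1(\R^2)$, and the Hardy--BMO duality then produces a bound of the form
\[
\int |\nabla \psi|^2 \zeta_R^2 \,\d x \lesssim \int \psi^2 |\nabla \zeta_R|^2\,\d x + \|\phi\|_{BMO}\, \|\psi \nabla \psi\|_{L_2(\operatorname{supp}\nabla \zeta_R)}\|\zeta_R \nabla \zeta_R\|_{L_2}.
\]

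For case (1), $b \in L_q(\R^2)$ with $q>2$ gives $\nabla \psi \in L_q \cap L_2$, so by Morrey's embedding (applied after a suitable normalisation) the oscillation of $\psi$ on the annulus $B(2R)\setminus B(R)$ is controlled; combined with the localisation $\operatorname{supp}\nabla \zeta_R \subset B(2R)\setminus B(R)$ and the fact that $\int_{|x|>R}|\nabla \psi|^2 \to 0$, both terms on the right-hand side vanish as $R \to \infty$. This forces $\nabla \psi \equiv 0$, hence $b \equiv 0$. The MHD system reduces to the 2D stationary Navier--Stokes equations with $\nabla u \in L_2(\R^2)$, where the Gilbarg--Weinberger-type Liouville theorem recalled in Section~\ref{ssec:intro_liouville} yields $u \equiv \text{const}$, and the constraint $u \in BMO^{-1}(\R^2)$ rules out any nonzero constant, so $u \equiv 0$.

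For case (2), $b \in BMO^{-1}(\R^2)$ corresponds to $\psi \in BMO(\R^2)$ as a genuine hypothesis (of uniform control on $\psi$). I would apply the same Caccioppoli scheme to a truncation $\psi_k := \max(-k,\min(k,\psi))$, which lies in $L^{\infty}\cap BMO$ with $\nabla \psi_k = \nabla \psi \cdot \mathds{1}_{\{|\psi|<k\}}$; this keeps every integral well-defined while preserving the Jacobian structure. The Hardy--BMO pairing then gives a bound in terms of $\|\phi\|_{BMO}\|\psi\|_{BMO}$ times quantities vanishing as $R\to\infty$, and the conclusion $\nabla \psi \equiv 0$ follows after passing $k,R\to\infty$. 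The reduction to Navier--Stokes and the elimination of constants are then as in case (1). The main technical obstacle is the careful execution of the Hardy--BMO duality in case (2), where $\psi$ has only BMO integrability: one must justify the Jacobian identity and the CLMS estimate for the truncated $\psi_k$ and track the dependence on $k$ when removing the truncation.
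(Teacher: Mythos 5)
Your idea of reducing everything to a Caccioppoli estimate for the stream function $\psi$ is the right starting point for part (2), but as written both parts contain genuine gaps. For part (1), the scheme does not close under the stated hypotheses on $b$ alone. After subtracting a constant, Morrey's inequality only gives $\mathrm{osc}_{B(2R)\setminus B(R)}\,\psi \lesssim R^{1-2/q}\Vert\nabla\psi\Vert_{L_q}$, so the term $\int (\psi-c)^2|\nabla\zeta_R|^2\,\d x$ is of order $R^{2-4/q}\Vert b\Vert^2_{L_q(B(2R)\setminus B(R))}$, and $2-4/q>0$ for every $q>2$; the tail $\Vert b\Vert_{L_q(B(2R)\setminus B(R))}$ tends to zero with no quantitative rate, so the product need not vanish (for $q=\infty$ it manifestly does not, the bound being $\sim R^{2}\Vert b\Vert_{L_\infty}^2$). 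The drift term suffers from the same growth. The paper does not prove part (1) through the stream function at all: it decomposes the pressure as $\pi=\pi_1+\pi_2$ with $\pi_1\in L_2(\R^2)$ (obtained from the div--curl/Hardy--BMO structure of $u\cdot\nabla u$ when $u=\nabla^\perp\phi$, $\phi\in BMO(\R^2)$, $\nabla u\in L_2(\R^2)$) and $\nabla\pi_2\in L_s(\R^2)$ with $1/s=1/q+1/2$, and then runs a Caccioppoli-type argument on the Bernoulli-type identity \eqref{E4.7} for $|\nabla u|^2+|\curl b|^2$, controlling the averages of $u$ on annuli via the special means $[u]_{\varphi,R}$. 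Some such use of the momentum equation appears unavoidable here; the information carried by the magnetic equation alone is too weak to force $\nabla\psi\equiv 0$.

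For part (2), your Caccioppoli estimate is essentially the paper's computation, but your conclusion overshoots. The right-hand side is bounded by $c(\varphi)\Vert\psi\Vert^2_{BMO}\left(1+\Vert\phi\Vert^2_{BMO}\right)$ \emph{uniformly} in $R$; it does not vanish as $R\to\infty$, since already the first term is comparable to $\avint_{B(2R)}|\psi-[\psi]_{,2R}|^2\,\d x\approx\Vert\psi\Vert^2_{BMO}$. So you may conclude $\nabla\psi\in L_2(\R^2)$, i.e.\ $b\in L_2(\R^2)$, but not $\nabla\psi\equiv 0$. The missing step is to then invoke part (1) of Theorem \ref{thm:main}, whose proof relies on the maximum principle for the drift-diffusion equation through the oscillation lemma (Lemma \ref{lm:oscillation}); the maximum principle never enters your proposal, yet it is the mechanism that ultimately forces $b\equiv 0$. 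The truncation $\psi_k$ is an unnecessary complication, since weak solutions are smooth and BMO functions lie in $L_{p,loc}$ for every $p<\infty$ by John--Nirenberg, so the paper simply tests with $(\psi-[\psi]_{,R})\varphi_R^2$ and uses Cauchy--Schwarz rather than the CLMS theorem. Your closing observation that a nonzero constant cannot belong to $BMO^{-1}(\R^2)$ is correct and is indeed what upgrades ``$u$ constant'' to ``$u\equiv 0$''.
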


\begin{remark}
The second point in the previous theorem was also proved in \cite{Chae18} in the 3D case; the proof of this result presented herein is simpler.
\end{remark}

%\begin{remark}
%Let us point out that Theorem \ref{thm:main2} if also true if we interchange the roles of $u$ and $b$, and the proof of this statement follows the same ideas as the proof of Theorem \ref{thm:main2}.
%\end{remark}

\section{Properties of the stream function associated to the magnetic field}
\label{sec:stream}
In this section, we establish the incompressibility of the magnetic field $b$ and,  as a result, the existence of a stream function associated to it.

\begin{proposition}[Incompressibility and existence of the stream function associated to the magnetic field] \label{prop:stream}
Let $u,b$ weak solutions to system \eqref{eq:mhd} such that condition \eqref{setup} holds. Then, $\div b \equiv 0$ and there exists a stream function $\psi$
%(in $\dot{H}^2_2(\R^2)$ the closure of $C^{\infty}_0(\R^2)$ with the semi-norm $\|\nabla^2 \cdot \|_{L_2(\R^2)}$)
such that $b = \nabla^\perp \psi$.
\end{proposition}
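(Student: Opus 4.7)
The plan is to derive an elliptic equation for $d := \dvg b$, prove that $d \equiv 0$, and then invoke the Poincar\'e lemma to produce $\psi$.

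\textbf{Step 1 (PDE for $d$).} I would take the divergence of the magnetic equation in \eqref{eq:mhd}. A direct expansion gives
\[
\dvg(u \cdot \nabla b) = u \cdot \nabla d + \partial_i u_j \partial_j b_i, \qquad \dvg(b \cdot \nabla u) = \partial_i b_j \partial_j u_i,
\]
where the latter uses $\dvg u = 0$. The symmetry $\partial_i u_j \partial_j b_i = \partial_j u_i \partial_i b_j$ cancels the quadratic cross terms, leaving
\[
-\DD d + u \cdot \nabla d = 0 \quad \text{in } \R^2.
\]
Condition \eqref{setup} yields $d \in L_2(\R^2)$, and by the regularity remark $d \in C^\infty(\R^2)$.

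\textbf{Step 2 (Vanishing of $d$).} Testing the PDE against $d$ over $B_R$ and using $\dvg u = 0$ to rewrite the drift term, I would obtain the identity
\[
\int_{B_R}|\nabla d|^2 \dd x = \int_{\partial B_R} d\, \partial_\nu d \dd S - \frac{1}{2}\int_{\partial B_R}(u \cdot \nu) d^2 \dd S,
\]
and show that its right-hand side vanishes along a sequence $R_n \to \infty$. A standard Fubini argument produces radii along which $\int_{\partial B_{R_n}}(d^2 + |\nabla d|^2) \dd S$ is so small that the first boundary term vanishes via Cauchy-Schwarz. The drift term is where I expect the main obstacle. Exploiting the two-dimensional divergence-free structure, I would write $u = \nabla^\perp \eta$, so that on $\partial B_R$ the quantity $u \cdot \nu$ becomes a tangential derivative of $\eta$; integrating by parts on the closed curve $\partial B_R$ converts the drift term into
\[
\int_{\partial B_R}(u \cdot \nu) d^2 \dd S = -2\int_{\partial B_R}(\eta - [\eta]_{\partial B_R})\, d\, \partial_\tau d \dd S.
\]
A one-dimensional Sobolev inequality on the circle bounds $\|\eta - [\eta]_{\partial B_R}\|_{L_\infty(\partial B_R)}$ in terms of $\|u\|_{L_2(\partial B_R)}$, and combined with a trace estimate together with the BMO-type logarithmic growth of $\eta$ (which follows from $\nabla^2 \eta = \nabla u \in L_2(\R^2)$), this can be made compatible with the small trace norms of $d$ and $\partial_\tau d$ along the selected radii. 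Sending $R_n \to \infty$ gives $\int_{\R^2}|\nabla d|^2 \dd x = 0$, so $d$ is constant; since $d \in L_2(\R^2)$, $d \equiv 0$.

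\textbf{Step 3 (Existence of $\psi$).} With $\dvg b = 0$ on the simply connected domain $\R^2$, the Poincar\'e lemma yields a smooth potential $\psi : \R^2 \to \R$ with $b = \nabla^\perp \psi$. The crux of the whole argument is the balance of Fubini-type selection with the 2D stream-function trick in Step 2; once this closes, the rest of the proposition is routine.
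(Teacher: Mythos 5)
Your Step 1 is correct and is exactly the paper's starting point: taking the divergence of the induction equation, the quadratic terms $\partial_i u_j\partial_j b_i$ and $\partial_i b_j\partial_j u_i$ cancel and $d:=\dvg b\in L_2(\R^2)$ solves $-\DD d+u\cdot\nabla d=0$. Step 3 (Poincar\'e lemma on the simply connected plane) is also fine, and is if anything cleaner than the paper's route via $-\DD\psi=\curl b$. The problem is Step 2, which you yourself flag as the main obstacle and which does not close as described. First, the Fubini/mean-value selection of radii requires an a priori bound on $\int_{B(2R)\setminus B(R)}|\nabla d|^2\dd x$; but global (or even annulus-wise bounded) square-integrability of $\nabla d$ is not among the hypotheses --- it is essentially the conclusion you are trying to reach, so the selection of radii along which $\int_{\partial B(R_n)}|\nabla d|^2\dd S$ is small is not justified. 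Second, even granting good radii, the quantitative balance fails: your tangential integration by parts converts the drift term into $-2\int_{\partial B(R)}(\eta-[\eta]_{\partial B(R)})\,d\,\partial_\tau d\dd S$, thereby reintroducing $\nabla d$ on the sphere, and the sup bound $\|\eta-[\eta]_{\partial B(R)}\|_{L_\infty(\partial B(R))}\leq cR^{1/2}\|u\|_{L_2(\partial B(R))}$ is of order $R\sqrt{\log R}$ on generic good radii (by Lemma \ref{lm:lions}); multiplying this against trace norms of $d$ and $\nabla d$ that are at best $O(R^{-1/2})\cdot o(1)$ and $O(R^{-1/2})\cdot(\text{uncontrolled})$ respectively leaves nothing that tends to zero. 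Tracking the resulting recursion for $g(R)=\int_{B(R)}|\nabla d|^2\dd x$ only yields $g(R)^2\lesssim \log R\, D(R)\, g(4R)$ with $D(R)=\|d\|^2_{L_2(B(2R)\setminus B(R))}$, which is consistent with $g$ blowing up and gives no contradiction.

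The repair is precisely the paper's Lemma \ref{lm:liouville}: test the equation with $d\varphi_R^2$ for a smooth cut-off $\varphi_R$ instead of integrating over sharp balls. No boundary terms appear, the term $\int|\nabla d|^2\varphi_R^2\dd x$ sits on the left and can absorb (via Ladyzhenskaya's inequality and Young) the only contribution involving $\nabla d$, and the drift term $\int d^2\varphi_R\, u\cdot\nabla\varphi_R\dd x$ contains no derivative of $d$ at all; it is handled by splitting $u=[u]_{,R}+(u-[u]_{,R})$, bounding $|[u]_{,R}|\lesssim\sqrt{\log 2R}$ by Lemma \ref{lm:lions} and the oscillation by $\|\nabla u\|_{L_2}$ via Sobolev--Poincar\'e. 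All right-hand side terms then carry negative powers of $R$ and one concludes $\nabla d\equiv 0$, hence $d\equiv 0$ since $d\in L_2(\R^2)$. Your instinct to exploit $\dvg u=0$ and the finite Dirichlet energy of $u$ is the right one, but it must be implemented with the cut-off (Caccioppoli) version rather than with sharp boundary integrals.
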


In order to prove it, we need the following simple Liouville-type result.
\begin{lemma}[Liouville-type theorem for drift-diffusion equations]\label{lm:liouville}
Let $\rho\in L_{\infty}(\R^2)$ and let $u \in \mathring{L}^1_2(\R^2)$ (the closure of $C^{\infty}_0(\R^2)$ with respect to the semi-norm $\|\nabla \cdot \|_{L_2(\R^2)}$) be a vector field such that $\div(\rho u) = 0$. Let $f\in L_2(\R^2)$ be a solution of
\begin{equation}\label{B.3}
    -\DD f + \dvg(\rho u f) = 0 \quad\mbox{in }\mathcal{D}'(\R^2).
\end{equation}
Then $f \equiv 0$.
\end{lemma}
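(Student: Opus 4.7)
The idea is a cutoff energy estimate that exploits the divergence-free structure $\dvg(\rho u) = 0$ to eliminate the drift term. Let $\eta_R \in C_c^\infty(\R^2)$ be a cutoff with $\eta_R \equiv 1$ on $B(R)$, vanishing outside $B(2R)$, and $|\nabla \eta_R| \lesssim R^{-1}$. Standard elliptic regularity for the equation $-\Delta f = -\dvg(\rho u f)$ gives $\nabla f \in L_{2,loc}(\R^2)$, so $f\eta_R^2$ is an admissible test function for \eqref{B.3}.

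Testing \eqref{B.3} against $f\eta_R^2$ and integrating the drift term by parts using the identity $\dvg(\rho u \eta_R^2) = 2\rho u \eta_R \cdot \nabla\eta_R$, which is immediate from $\dvg(\rho u) = 0$, produces the energy identity
\[
\int_{\R^2} |\nabla f|^2 \eta_R^2 \dd x = -2\int_{\R^2} f \eta_R \nabla f \cdot \nabla\eta_R \dd x + \int_{\R^2} \rho u f^2 \eta_R \cdot \nabla\eta_R \dd x .
\]
The first remainder is handled by Young's inequality: absorbing a fraction of $\int|\nabla f|^2 \eta_R^2 \dd x$ into the left-hand side leaves a contribution $\lesssim R^{-2}\|f\|_{L_2(\R^2)}^2 \to 0$. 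For the second, with $A_R := B(2R) \setminus B(R)$, H\"older combined with the 2D Gagliardo--Nirenberg inequality (applied to $f$ times a slightly enlarged cutoff) yields
\[
\left|\int_{\R^2} \rho u f^2 \eta_R \cdot \nabla \eta_R \dd x\right| \lesssim \frac{\|\rho\|_\infty}{R}\, \|u\|_{L_2(A_R)}\, \|f\|_{L_2(\R^2)} \bigl( \|\nabla f\|_{L_2(B(3R))} + R^{-1}\|f\|_{L_2(\R^2)} \bigr) .
\]
The key decay is $\|u\|_{L_2(A_R)} = o(R)$, which follows from the Poincar\'e inequality on the annulus $A_R$ together with the fact that $u$ is the limit in the $\|\nabla \cdot\|_{L_2}$ seminorm of a sequence $u_n \in C_c^\infty(\R^2)$: since $[u_n]_{A_R} = 0$ for $R$ exceeding the support of $u_n$, a Cauchy argument gives $[u]_{A_R} \to 0$, and hence $\|u\|_{L_2(A_R)} \lesssim R\|\nabla u\|_{L_2(A_R)} + o(R) = o(R)$.

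The main obstacle is the factor $\|\nabla f\|_{L_2(B(3R))}$ above, which is not a priori globally bounded in $R$. To overcome this I would set $E_R := \int_{\R^2} |\nabla f|^2 \eta_R^2 \dd x$ and apply a further Young's inequality to recast the previous bound as a hole-filling recursion $E_R \leq g(R) + \theta\, E_{3R}$, with $\theta \in (0,1)$ and $g(R) \to 0$ absorbing all genuinely small contributions. A Moser-type local bound for the drift-diffusion equation (whose coefficient $\rho u$ lies in $L_{p,loc}(\R^2)$ for every $p<\infty$) ensures that $E_R$ grows at most polynomially in $R$; choosing $\theta$ small enough in terms of this growth rate and iterating the recursion yields $\sup_R E_R < \infty$, i.e., $\nabla f \in L_2(\R^2)$. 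Returning to the energy identity and letting $R \to \infty$, all remainder terms vanish, giving $\int_{\R^2} |\nabla f|^2 \dd x = 0$. Consequently $f$ is constant on $\R^2$, and the hypothesis $f \in L_2(\R^2)$ forces $f \equiv 0$.
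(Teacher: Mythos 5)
There is a genuine gap in your treatment of the drift term, and it occurs precisely at the step you label as ``the key decay'': the claim $\|u\|_{L_2(A_R)} = o(R)$, derived from $[u]_{A_R} \to 0$. This is false for a general $u \in \mathring{L}^1_2(\R^2)$. In two dimensions the map $v \mapsto [v]_{A_R}$ is \emph{not} continuous with respect to the seminorm $\|\nabla\cdot\|_{L_2(\R^2)}$: the logarithmic cut-offs $\eta_n$ (equal to $1$ on $B(n)$, vanishing outside $B(n^2)$, logarithmically interpolated in between) satisfy $\|\nabla\eta_n\|_{L_2(\R^2)}^2 \sim (\log n)^{-1} \to 0$ while $[\eta_n]_{A_R} \to 1$ for each fixed $R$, so the interchange of limits in your ``Cauchy argument'' is not justified. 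Concretely, a function such as $u(x) = \sqrt{\log\log|x|}$ for large $|x|$ (suitably smoothed near the origin) has $\nabla u \in L_2(\R^2)$, belongs to $\mathring{L}^1_2(\R^2)$, and satisfies $[u]_{A_R} \to \infty$; no choice of additive constant repairs this. The sharp statement is the one the paper isolates as Lemma \ref{lm:lions}: $|[u]_{,R}| \lesssim \sqrt{\log 2R}$, whence only $\|u\|_{L_2(A_R)} = O(R\sqrt{\log R})$. With this correct bound, your estimate of the second remainder acquires a factor $\sqrt{\log R}$ in front of $\|\nabla f\|_{L_2(B(3R))}$, so after Young's inequality the quantity $g(R)$ in your recursion $E_R \leq g(R) + \theta E_{3R}$ diverges instead of tending to zero, and the hole-filling argument collapses. (The auxiliary claim that a Moser-type bound forces polynomial growth of $E_R$ is also left unproved, but it is the failure of $g(R) \to 0$ that is fatal.)

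Both difficulties are avoidable, and the paper's proof shows how. First, split $u = [u]_{,R} + (u - [u]_{,R})$ inside the drift integral: the mean part is bounded by $R^{-1}|[u]_{,R}|\,\|\rho\|_{L_\infty}\int_{A_R}|f|^2\dd x \lesssim R^{-1}\sqrt{\log 2R}\,\|f\|_{L_2(\R^2)}^2 \to 0$, since $\int_{A_R}|f|^2\dd x$ is bounded (indeed a vanishing tail of a fixed $L_2$ function) and carries no compensating power of $R$; the oscillation part is controlled without any logarithm via the Sobolev--Poincar\'e inequality $\left(\avint_{B(R)}|u-[u]_{,R}|^4\dd x\right)^{1/4} \lesssim \|\nabla u\|_{L_2(B(R))}$. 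Second, apply Ladyzhenskaya's inequality to $f\eta_R$ with the \emph{same} cut-off as on the left-hand side, so that the gradient factor produced is $\|\eta_R\nabla f\|_{L_2(\R^2)}$ and can be absorbed directly by Young's inequality into $E_R$; this removes any need for a hole-filling recursion or an a priori growth bound on $E_R$.
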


\begin{proof}

Let us start by noticing that classical regularity theory insures that $f\in C^1(\R^2)$. Next, we introduce the following cut-off function: $\varphi \in C^{\infty}_0(\R^2)$ such that $0\leq \varphi \leq 1$ with $\varphi \equiv 1$ in $B(1/2)$ and $\varphi \equiv 0$ in $B\setminus B(3/4)$; let $R>1$ and set $\varphi_R(x) := \varphi(x/R)$. By testing equation \eqref{B.3} with $f\varphi_R^2$ and then integrating by parts twice, we get
\begin{align*}
    \int_{B(R)}|\nabla f|^2\varphi_R^2 \dd x &= \int_{B(R)}\frac{|f|^2}{2}\DD \varphi_R^2 \dd x + \int_{B(R)}|f|^2\varphi_R\rho u\cdot \nabla \varphi_R \dd x\\
    &\leq \frac{c(\varphi)}{R^2}\int_{B(R)\setminus B(R/2)}|f|^2 \dd x + \frac{c(\varphi)|[u]_{,R}|\|\rho\|_{L_{\infty}}}{R}\int_{B(R)\setminus B(R/2)}|f|^2 \dd x\\
    & + \frac{c(\varphi)\|\rho\|_{L_{\infty}}}{\sqrt{R}}\left(\int_{B(R)} |f|^2 \dd x \right)^{\frac{1}{2}}\left(\int_{\R^2}|f\varphi_R|^4 \dd x\right)^{\frac{1}{4}}\left(\avint_{B(R)}|u - [u]_{,R}|^4 \dd x \right)^{\frac{1}{4}}
\end{align*}
Now we use Ladyzhenskaya's inequality to obtain:
\begin{align*}
    \left(\int_{\R^2}|f\varphi_R|^4 \dd x\right)^{\frac{1}{4}} &\leq c\left(\int_{\R^2} |f\varphi_R|^2\dd x \right)^{\frac{1}{4}}\left(\int_{\R^2} |\varphi_R\nabla f + f\nabla\varphi_R|^2\dd x \right)^{\frac{1}{4}}\\
    &\leq c\left(\int_{B(R)}|\nabla f|^2\varphi_R^2 \dd x \right)^{\frac{1}{4}}\left( \int_{B(R)}|f|^2 \dd x \right)^{\frac{1}{4}} + \frac{c(\varphi)}{\sqrt{R}}\left(\int_{B(R)}|f|^2 \dd x\right)^{\frac{1}{2}}.
\end{align*}
Next, by using Lemma \ref{lm:lions} to control the term $|[u]_{,R}|$ and by introducing for simplicity the norm $|\|u\|| := \|\nabla u\|_{L_2(\R^2)} + \|u\1_B\|_{L_2(\R^2)}$, we find
\begin{align*}
    \int_{B(R)}|\nabla f|^2\varphi_R^2 \dd x &\leq c(\varphi)\left(R^{-2} + \|\rho\|_{L_{\infty}(\R^2)}R^{-1}\sqrt{\log 2 R}|\|u\|| \right)\int_{B(R)\setminus B(R/2)}|f|^2 \dd x\\
    & + \frac{c(\varphi)\|\rho\|_{L_{\infty}}}{\sqrt{R}}\|\varphi_R\nabla f\|_{L_2(B(R))}^{\frac{1}{2}}\|f\|_{L_2(B(R))}^{\frac{3}{2}}\|\nabla u\|_{L_2(B(R))}\\
    & + \frac{c(\varphi)\|\rho\|_{L_{\infty}}}{R}\|f\|^2_{L_2(B(R))}\|\nabla u\|_{L_2(B(R))}\\
    &\leq \frac{1}{2} \int_{B(R)}|\nabla f|^2\varphi_R^2 \dd x + c(\varphi)\left(R^{-2} + \|\rho\|_{L_{\infty}}R^{-1}\sqrt{\log 2 R}|\|u\|| \right)\|f\|^2_{L_2}\\
    & + c(\varphi)\left(R^{-\frac{2}{3}}\|\rho\|_{L_{\infty}}^{\frac{4}{3}}\|\nabla u\|^{\frac{4}{3}}_{L_2(\R^2)} + R^{-1}\|\rho\|_{L_{\infty}}\|\nabla u\|_{L_2(\R^2)} \right)\|f\|^2_{L_2(\R^2)};
\end{align*}
%where we used the continuous embedding $\mathring{L}^1_2(\R^2) \hookrightarrow BMO(\mathbb{R}^2)$ to obtain the first inequality. It is clear from the latter inequality, by letting $R\to\infty$, that $\int_{\R^2}|\nabla f|^2 \dd x  = 0$ and a fortiori $f \equiv 0$, since $f$ was assumed to be square-integrable.
and it is clear from the latter inequality that $\int_{\R^2}|\nabla f|^2 \dd x  = 0$ and, a fortiori, $f \equiv 0$.
\end{proof}

\begin{proof}[Proof of Proposition \ref{prop:stream}]
Taking the divergence in the second equation in \eqref{eq:mhd}, we get:
\[
-\DD(\dvg b) + u\cdot \nabla (\dvg b) = 0\quad\mbox{in }\R^2.
\]
Applying Lemma \ref{lm:liouville} with $f = \dvg b$, $\rho = 1$, we get that $\dvg b \equiv 0$, and the existence of the stream function $\psi$ comes from the solvability of the equation $-\DD \psi = \curl b$, which is clear since the right-hand side of this latter equation belongs to $L_2(\mathbb{R}^2)$ due to \eqref{setup}.
%; see \cite[Chapter 1, Proposition 2.3]{Ser14}.
\end{proof}

\begin{remark}[Liouville theorem for the incompressible Navier-Stokes system]\label{R3.1}
Another direct consequence of Lemma \ref{lm:liouville} is that, when $b\equiv 0$ (i.e.~for the incompressible Navier-Stokes system) and $\nabla u \in L_2(\R^2)$, by noticing that the vorticity $w := \curl u$ satisfies \eqref{eq:vorticity ns}, we get that $w \equiv 0$; thus $-\DD u = 0$ and, consequently, $u$ is constant. We have thus recovered, in a simpler way, the result of Gilbarg and Weinberger in \cite{Gilb78}.

\end{remark}
                %%%%%%%%%%%%%%%%%%%%%%%%%
%Because, we have $-\mu\DD (\curl v) - \rho u\cdot \nabla (\curl v) = 0$ (where $\curl v := v_{2,1} - v_{1,2}$), we have by using the same reasoning as in the previous proposition that:
%\begin{proposition}\label{PropB.0.3}
%Let $v,\rho$ solutions to system \eqref{CNS} such that conditions \eqref{setup} hold. Then, $\curl v \equiv 0$ and there exists a potential function $\phi$ (in $\dot{H}^2(\R^2)$ the closure of $C^{\infty}_0(\R^2)$ with the semi-norm $\|\nabla^2 \cdot \|_{L_2(\R^2)}$) such that $v = (\phi_{,1},\phi_{,2})^T$.
%\end{proposition}

%\begin{remark}[Higher dimension generalisation]
%An analogue of Proposition \ref{prop:stream} is also available in higher dimension, and is based on an adaptation of Lemma \ref{lm:liouville} to higher dimensions.
%\end{remark}

Now, let $\psi$ be as in Proposition \ref{prop:stream}. Then, through explicit computations, the second equation in \eqref{eq:mhd} may be rewritten precisely as:
\[
\nabla^\perp\left( -\DD \psi + u\cdot \nabla \psi \right) = 0.
\]
Consequently, we get that, for some (a priori unknown) constant $c_0 \in \mathbb{R}$,
\begin{equation}\label{B.5}
    -\DD \psi + u\cdot \nabla \psi = c_0.
\end{equation}
The next proposition provides more information on the value of $c_0$.

\begin{proposition}\label{prop:c0}
Let $u,b$ be solutions to system \eqref{eq:mhd} such that condition \eqref{setup} holds. If we assume in addition that $u$ or $b$ belongs to $BMO^{-1}(\R^2)$, then $c_0 = 0$ (with $c_0$ as in \eqref{B.5}).
\end{proposition}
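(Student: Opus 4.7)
The plan is to test the drift-diffusion equation \eqref{B.5} against a standard cutoff $\eta_R \in C_c^\infty(B(2R))$ with $\eta_R \equiv 1$ on $B(R)$, $\abs{\nabla \eta_R} \leq C/R$ and $\abs{\DD \eta_R} \leq C/R^2$, and then to send $R \to \infty$. Exploiting $\dvg u = 0$ (which allows the transport term to be recast as $\dvg(\psi u)$) and integrating by parts, the testing identity takes the form
\[
c_0 \int_{\R^2} \eta_R \dd x = -\int_{\R^2} (\psi - c_R) \DD \eta_R \dd x - \int_{\R^2} (\psi - c_R)\, u \cdot \nabla \eta_R \dd x =: J_1(R) + J_2(R),
\]
valid for any constant $c_R \in \R$; the natural choice is $c_R := [\psi]_{0, 2R}$. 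Since the left-hand side is at least $\pi c_0 R^2$, it suffices to show that $J_1(R), J_2(R) = o(R^2)$ as $R \to \infty$.

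The hypothesis \eqref{setup} together with Lemma \ref{lm:lions} (applied to both $u$ and $b$) yields the logarithmic growth estimate $\norm{u}_{L_2(B(2R))} + \norm{b}_{L_2(B(2R))} \leq C R \sqrt{\log R}$. Combined with the Poincaré inequality, this gives $\norm{\psi - c_R}_{L_2(B(2R))} \leq C R \norm{b}_{L_2(B(2R))} \leq C R^2 \sqrt{\log R}$, and Cauchy--Schwarz immediately bounds $\abs{J_1(R)} \leq C R \sqrt{\log R}$.

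The main obstacle is the transport term $J_2(R)$: without further structure, Cauchy--Schwarz provides only $\abs{J_2(R)} \leq C R^2 \log R$, which fails to be $o(R^2)$. The $BMO^{-1}$ hypothesis is precisely what closes this gap. In the first case, $b \in BMO^{-1}(\R^2)$, one observes that $\psi$ itself lies in $BMO(\R^2)$: writing $b = \dvg G$ with $G \in BMO$ and using $-\DD \psi = \curl b$, the function $\psi$ is expressible as a sum of double Riesz transforms of the entries of $G$, and these operators are bounded on $BMO$ modulo constants. This upgrades the estimate to $\norm{\psi - c_R}_{L_2(B(2R))} \leq C R \norm{\psi}_{BMO}$, whereupon Cauchy--Schwarz gives $\abs{J_2(R)} \leq C R \sqrt{\log R} = o(R^2)$.

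In the second case, $u \in BMO^{-1}(\R^2)$, we write $u = \dvg F$ with $F \in BMO$ and integrate by parts once more in $J_2(R)$ to transfer the divergence away from $F$. Since $\int \partial_i \partial_j \eta_R \dd x = 0$, the contribution of the constant $[F_{ij}]_{0, 2R}$ cancels between the two resulting integrals, producing
\[
J_2(R) = -\int_{\R^2} (F_{ij} - [F_{ij}]_{0, 2R})\, \partial_i \psi\, \partial_j \eta_R \dd x - \int_{\R^2} (F_{ij} - [F_{ij}]_{0, 2R})\, (\psi - c_R)\, \partial_i \partial_j \eta_R \dd x.
\]
The standard BMO bound $\norm{F - [F]_{0, 2R}}_{L_2(B(2R))} \leq C R \norm{F}_{BMO}$, together with Cauchy--Schwarz and the estimates above, yields $\abs{J_2(R)} \leq C R \sqrt{\log R} = o(R^2)$ here too. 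Dividing both sides of the testing identity by $\int_{\R^2} \eta_R \dd x \geq \pi R^2$ and letting $R \to \infty$ forces $c_0 = 0$.
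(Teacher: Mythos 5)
Your proof is correct and follows essentially the same route as the paper's: test \eqref{B.5} against a rescaled cut-off, use $\div u=0$ to integrate the transport term by parts, bound the resulting mean-oscillation factors by $O(R)$ via the $BMO$ structure (with $\psi\in BMO(\R^2)$ obtained from double Riesz transforms when $b\in BMO^{-1}$), and control $\|u\|_{L_2(B(R))}$ by $O(R\sqrt{\log R})$ via Lemma \ref{lm:lions}. The only differences are cosmetic: you handle the Laplacian term through Poincar\'e plus Lemma \ref{lm:lions} rather than through $\curl b\in L_2(\R^2)$, and you write out in full the case $u\in BMO^{-1}(\R^2)$ (moving the divergence off $F$ and exploiting $\int \partial_i\partial_j\eta_R\,\d x=0$), which the paper leaves to the reader as ``similar.''
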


\begin{remark}\label{rem:assumptionC0}
It will be clear from the proof that follows that the conclusion of this proposition still holds if we assume for instance $u\in L_p(\R^2)$ and $b\in L_q(\R^2)$ with $p,q\in [1,\infty[$; of course, this condition can be weakened even further.
\end{remark}

\begin{proof}
We will present only the case $b\in BMO^{-1}(\R^2)$ since the case $u\in BMO^{-1}(\R^2)$ can be dealt with in a similar manner.

Recall that the Riesz transform of a function $f$ is defined as $Rf := \nabla ((-\DD)^{\frac{1}{2}}f$. Because the Riesz transform is a bounded operator from $BMO(\R^n)$ to $BMO(\R^n)$ (see for instance \cite{Stein93}), we have that the stream function $\psi$ associated to $b$ belongs to $BMO(\R^2)$.

Recall our cut-off function $\varphi \in C^{\infty}_0(\R^2)$ such that $0\leq \varphi \leq 1$ with $\varphi \equiv 1$ in $B(1/2)$ and $\varphi \equiv 0$ in $B\setminus B(3/4)$. As before, for $R>1$, set $\varphi_R(x) := \varphi(x/R)$. We get from \eqref{B.5} that
\[
c_0\int_{B(R)} \varphi_R \dd x = -\int_{B(R)}\DD \psi\varphi_R \dd x + \int_{B(R)} u\cdot \nabla (\psi - [\psi]_{,R})\varphi_R \dd x.
\]
This implies,
\begin{align*}
    |c_0|R^2\int_B\varphi \dd x &\leq R\|\nabla b\|_{L_2}\|\varphi\|_{L_2} + \left| \int_{B(R)}(\psi - [\psi]_{,R}) u\cdot\nabla \varphi_R \dd x  \right|\\
    &\leq c(\varphi)\left(R\|\nabla b\|_{L_2(\R^2)} + \|\psi\|_{BMO(\R^2)} \|u\|_{L_2(B(R))}\right).
\end{align*}
From Lemma \ref{lm:lions}, we get that
\[
|c_0| \leq c(\varphi,b,u)\left( \frac{1}{R} + \frac{\sqrt{\log{2R}}}{R}\right) \to 0\mbox{ as }R\to \infty.
\]
This concludes the proof.
\end{proof}

\section{Proof of the main theorems}\label{sec:proof_liouville}
We are ready to give the proof of Theorem \ref{thm:main}.

\begin{proof}[Proof of Theorem \ref{thm:main}]
We divide the proof into four steps: the first two steps deal, respectively, with the first two points in the theorem, and the third point in the theorem is proved in the last two steps.

\vspace{2.5mm}
\paragraph{\textbf{Step 1.}} Since $b\in L_2(\R^2)$ implies $b \in BMO^{-1}(\R^2)$, we have thanks to Proposition \ref{prop:c0} that
\[
-\DD \psi + u\cdot\nabla \psi = 0\quad\mbox{in }\R^2.
\]
By setting $\psi^R(x) := \psi(R x)$ ($R>0$), we see that $\psi^R$ satisfies the conditions in Lemma \ref{lm:oscillation}, where we note that the maximum principle property is inherited from the previous drift-diffusion equation for $\psi$. Consequently, by taking $r=1/2$ in the oscillation lemma, we obtain:
\[
\sup_{x\in B(1/2)}|\psi^R(x) - \psi(0)| \leq c\|\nabla\psi^R\|_{L_2(B\setminus B(1/2))},
\]
thus
\[
\sup_{x\in B(R/2)}|\psi(x) - \psi(0)| \leq c\|b\|_{L_2(B(R)\setminus B(R/2))}\to 0\mbox{ as }R\to \infty.
\]
This implies that $\psi$ is constant and therefore $b\equiv 0$; consequently we have that
\[
-\DD u + u\cdot\nabla u + \nabla \pi = 0,\quad \dvg u = 0\quad\mbox{in }\R^2,
\]
with $\nabla u\in L_2(\R^2)$. This implies, thanks to Remark \ref{R3.1}, that $u$ is constant.
%\cite{Gilb78}, that $u$ is constant. And the first point of the theorem is proved. Or alternatively, notice that the vorticity $w=\curl u$ satisfies the equation $-\DD w + u\cdot\nabla w = 0$, and then apply Proposition \ref{lm:liouville} to conclude that $w \equiv 0$ and a fortiori $u$ is constant.

\vspace{2.5mm}
\paragraph{\textbf{Step 2.}} By applying the divergence operator to the first equation in \eqref{eq:mhd}, we find that
\begin{equation}\label{eq:laplace pressure}
-\DD \pi = \dvg\dvg(u\otimes u - b\otimes b)\quad\mbox{in }\R^2;
\end{equation}
this guarantees, if we set $r = \max(p,q)$, that
\begin{equation}\label{B.6}
    \|\pi\|_{L_{\frac{r}{2}}(\R^2)} \leq c\left( \|u\|^2_{L_r(\R^2)} + \|b\|^2_{L_r(\R^2)}\right);
\end{equation}
to obtain estimate \eqref{B.6}, we used the hypothesis, the embedding of $u,b$ in $BMO(\R^2)$ (due to \eqref{setup}) and the interpolation between $L_s$ and $BMO$ spaces; see \cite[Theorem 2]{Chen-Zhu05}. A detailed justification of this estimate (in a more general context) is given later in the document (see `Step 1 (Pressure estimates)' of the proof of the first point in Theorem \ref{thm:main2}).
%\comme{cite theorem for obtaining the estimate on $\pi$ from the laplace eqn.} \comme{do we need to do this up to a constant?}

Next, we rewrite the first equation in \eqref{eq:mhd}, in the following way:
\[
-\DD u + \nabla\left(\frac{|u|^2}{2} + \pi -\frac{|b|^2}{2}\right) -u^{\perp}\curl u = -b^{\perp}\curl b.
\]
By taking the scalar product with $u$ in the previous line, we get that
\[
-\DD\frac{|u|^2}{2} + |\nabla u|^2 + u\cdot\nabla\left(\frac{|u|^2}{2} + \pi -\frac{|b|^2}{2}\right) = -u\cdot b^{\perp}\curl b.
\]
Meanwhile, in view of the equality $\nabla \psi = -b^{\perp}$ and thanks to \eqref{B.5} (together with Remark \ref{rem:assumptionC0}), we have that
\[\curl b = u\cdot b^{\perp}\quad \mbox{in }\R^2.\]
Consequently, returning to the previous elliptic equation, we have that
\begin{equation}\label{B.7}
    |\nabla u|^2 + |\curl b|^2 = \DD\frac{|u|^2}{2} - u\cdot\nabla\left(\frac{|u|^2}{2} + \pi -\frac{|b|^2}{2}\right)\quad \mbox{in }\R^2.
\end{equation}
For ease of notation, set $Q := |u|^2/2 + \pi -|b|^2/2 \in L_{\frac{r}{2}}(\R^2)$. By multiplying \eqref{B.7} by our usual rescaled cut-off function $\varphi_R$ and then integrating, we obtain that
\begin{align*}
    \int_{B(R/2)}(|\nabla u|^2 + |\curl b|^2) \dd x &\leq \int_{B(R)\setminus B(R/2)}\frac{|u|^2}{2}\DD \varphi_R \dd x + \int_{B(R)\setminus B(R/2)} u\cdot\nabla \varphi_R Q \dd x\\
    &\leq  \frac{c(\varphi,p)}{R^{\frac{4}{p}}}\left(\int_{B(R)\setminus B(R/2)}|u|^p \dd x\right)^{\frac{2}{p}} + \\
    & + \frac{c(\varphi,p,q)}{R^{1-\frac{2}{s}}}\left(\int_{B(R)\setminus B(R/2)}|u|^p \dd x\right)^{\frac{1}{p}}\left(\int_{B(R)\setminus B(R/2)}|Q|^{\frac{r}{2}} \dd x\right)^{\frac{2}{r}},
\end{align*}
with $1/s = 1 - 1/p - 2/r$. Thus, the right-hand side in last inequality above vanishes in the limit as $R\to\infty$ whenever $1 - 2/s\geq 0$, i.e., $1/p + 2/r \geq 1/2$. And the second point is proved.

\vspace{2.5mm}
\paragraph{\textbf{Step 3.}} In this penultimate step, we prove the third point in the theorem for the non-critical cases $p \in ]1,2[$. Firstly, observe that, for this range of exponents, the div-curl lemma of Coifman, Lions, Meyer and Semmes (\textit{cf.}~\cite[Theorem 4]{cmsHardy}) applies. Due to the incompressibility condition, we deduce that $u \cdot \nabla \psi$ belongs to the Hardy space $\mathcal{H}^1(\mathbb{R}^2)$. Now, define
\begin{equation}\label{eq:v def approx psi in Step 3}
    v := (-\Delta)^{-1} (-u \cdot \nabla \psi),
\end{equation}
i.e.,
%, with $\pi$ underneath denoting the real constant, and not the pressure function,
\begin{equation*}
    v(x) = \frac{1}{2\pi}\int_{\mathbb{R}^2} \log |x-y| (u\cdot \nabla \psi)(y) \, \d y, \qquad \forall x \in \mathbb{R}^2.
\end{equation*}
Observe that the above integral is well-defined since the function $y \mapsto \log |y|$ belongs to the space $BMO(\mathbb{R}^2)$, which is the dual of $\mathcal{H}^1(\mathbb{R}^2)$; in fact, we readily deduce that $v \in L_\infty(\mathbb{R}^2)$. Moreover,
%considering the Jacobian factor of $|y|$ from the integral,
an application of Lebesgue's dominated convergence theorem shows that
\begin{equation*}
    \nabla v(x) = \frac{1}{2\pi} \int_{\mathbb{R}^2} \frac{(x-y)}{|x-y|^2} (u \cdot \nabla \psi)(y) \, \d y, \qquad \forall x \in \mathbb{R}^2.
\end{equation*}
The above and, for instance, \cite[Theorem 2.2]{Krantz} and \cite[Theorem 2.1.2]{grafakos}, yield $\nabla v \in L_2(\mathbb{R}^2)$. Additionally, using the fundamental property of the Poisson kernel in \eqref{eq:v def approx psi in Step 3}, we obtain, in the sense of distributions,
\begin{equation*}
    -\Delta v + u \cdot \nabla \psi = 0 \qquad \text{in } \mathbb{R}^2.
\end{equation*}

Our goal is now to ``transfer'' the square-integrability of $\nabla v$ onto $\nabla \psi$, since these satisfy similar equations. To this end, returning to the drift-diffusion equation for the stream function $\psi$ and taking an additional derivative, we find that
\begin{equation*}
    -\Delta (\nabla \psi - \nabla v) = 0 \qquad \text{in } \mathbb{R}^2.
\end{equation*}
%By convolving the above with the usual Friedrichs mollifier, denoted by $\varrho_\varepsilon$ for $\varepsilon>0$, one observes that the approximations $(\nabla \psi - \nabla v)*\varrho_\varepsilon$ belong to $L_{p'}(\mathbb{R}^2)$, by virtue of Young's convolution inequality and $\nabla \psi \in L_{p'}(\mathbb{R}^2)$, $\nabla v \in L_2(\mathbb{R}^2)$.
%Hence, by the classical Liouville theorem for harmonic functions, or indeed directly from the mean-value formula for harmonic functions, each approximation $(\nabla \psi - \nabla v)*\varrho_\varepsilon$ is identically zero. We then recover, using the lower semicontinuity of the norm and the triangle inequality,
%\begin{equation*}
%    \Vert \nabla \psi \Vert_{L_2(\mathbb{R}^2)} \leq \liminf_{\varepsilon \to 0} \Vert \nabla \psi*\varrho_\varepsilon \Vert_{L_2(\mathbb{R}^2)} \leq \liminf_{\varepsilon \to 0} \Vert \nabla v*\varrho_\varepsilon \Vert_{L_2(\mathbb{R}^2)} = \Vert \nabla v \Vert_{L_2(\mathbb{R}^2)}.
%\end{equation*}
Since $\nabla \psi - \nabla v \in L_{p'}(\R^2) + L_2(\R^2)$,
%using the mean-value inequality for harmonic functions,
we deduce that $\nabla \psi - \nabla v = 0$ in $\R^2$. Hence, we get that $b \in L_2(\mathbb{R}^2)$ and conclude as we did in `Step 1' of this proof.

\vspace{2.5mm}
\paragraph{\textbf{Step 4.}} Finally, we prove the third point in the theorem for the critical case $p=1$. In this case, we cannot apply the div-curl lemma of Coifman, Lions, Meyer and Semmes. Nevertheless, we can still construct a solution of the problem $-\Delta v + u \cdot \nabla \psi = 0$ with the desired properties, i.e., $v \in L_\infty(\mathbb{R}^2)$ with $\nabla v \in L_2(\mathbb{R}^2)$. This is encapsulated in Lemma \ref{lemma:construction}, the proof of which is delayed until the Appendix. Comparing the equation for $v$ with the drift-diffusion equation for the stream function $\psi$, we find once again
\begin{equation*}
    -\Delta(\psi - v) = 0 \qquad \text{in } \mathbb{R}^2.
\end{equation*}
By applying the operator \say{$\nabla^{\perp}$} to the previous equation and by noticing that $b - \nabla^{\perp} v \in L_{\infty}(\R^2) + L_2(\R^2)$, we obtain that
%Taking an additional derivative, and using the same mollification argument as we did at the end of Step 3 along with the classical Liouville theorem for harmonic functions, we find that $\nabla \psi$ and $\nabla v$ differ by a fixed constant. It follows that there exist constants $a_0 \in \mathbb{R}$ and
there exists a constant $a \in \mathbb{R}^2$ such that
\begin{equation*}
    b = a + \nabla^{\perp} v \qquad \mbox{in } \mathbb{R}^2.
\end{equation*}
From the previous identity, we also get that $\nabla v \in L_2(\R^2)\cap L_{\infty}(\R^2)$, which leads,
%Moreover, $\nabla v = \nabla \psi - a_1 \in L_\infty(\mathbb{R}^2)$, as $\nabla \psi = -b^\perp \in L_\infty(\mathbb{R}^2)$ from the hypothesis of the theorem. Since we already knew from Lemma \ref{lemma:construction} that $\nabla v$ was square-integrable, it follows
by interpolation, to
\begin{equation}\label{eq:grad v very integrable}
    \nabla v \in L_q(\mathbb{R}^2), \qquad \forall q \in [2,\infty].
\end{equation}
We find, in this case, the following equation for the pressure:
\[
-\DD \pi = \dvg\dvg(u\otimes u - a\otimes \nabla^\perp v - \nabla^\perp v\otimes a - \nabla^\perp v\otimes \nabla^\perp v),
\]
which leads, without loss of generality, to
\begin{equation}
    \|\pi\|_{L_r(\R^2)} \leq c\|u\otimes u - a\otimes \nabla^\perp v - \nabla^\perp v\otimes a - \nabla^\perp v\otimes \nabla^\perp v\|_{L_r(\R^2)},
\end{equation}
for all $r\in [2,\infty[$ (more details can be found in `Step 1 (Pressure estimates)' of the proof of the first point in Theorem \ref{thm:main2}).

%Recall that we have already proved \eqref{B.7}. Returning to our current setting, we see that
We can see that identity \eqref{B.7} still holds true in our current setting; notice now that
\[
\nabla \frac{|b|^2}{2} = \nabla \frac{|\nabla v|^2}{2} + \nabla (a\cdot \nabla^\perp v).
\]
Thus, by setting $\tilde{Q} := |u|^2/2 + \pi - |\nabla v|^2/2 - a\cdot \nabla^\perp v \in L_r(\R^2)$ with $r \in [2,\infty[$, we have that
\[
|\nabla u|^2 + |\curl b|^2 = \DD\frac{|u|^2}{2} - u\cdot\nabla \tilde{Q} \quad \mbox{in }\R^2.
\]
Consequently, by testing the previous equation against our usual rescaled cut-off function $\varphi_R$, we obtain
\begin{align*}
    \int_{B(R/2)}(|\nabla u|^2 + |\curl b|^2) \dd x &\leq \int_{B(R)\setminus B(R/2)}\frac{|u|^2}{2}\DD \varphi_R \dd x + \int_{B(R)\setminus B(R/2)} u\cdot\nabla \varphi_R \tilde{Q} \dd x\\
    &\leq  \frac{c(\varphi)}{R^{2}} \int_{B(R)\setminus B(R/2)}|u|^2 \dd x + \\
    & + \frac{c(\varphi)}{R}\left(\int_{B(R)\setminus B(R/2)}|u|^2 \dd x\right)^{\frac{1}{2}}\left(\int_{B(R)\setminus B(R/2)}|\tilde{Q}|^2 \dd x\right)^{\frac{1}{2}},\\
    & \to 0\quad \mbox{ as }R\to \infty.
\end{align*}
%where we recall again that $u \in L_2(\mathbb{R}^2)$ by virtue of belonging to $BMO(\mathbb{R}^2) \cap L_1(\mathbb{R}^2)$ and the interpolation result \cite[Theorem 2]{Chen-Zhu05}, and the integrability of $\tilde{Q}$ was noted in \eqref{eq:Qtilde integ}. Taking the limit as $R \to \infty$ shows that both $u$ and $b$ are constants.
This proves the final point of the theorem.
\end{proof}

Let us give now the proof of Theorem \ref{thm:main2}. We start with the second assertion in the theorem.

\begin{proof}[Proof of the second point in Theorem \ref{thm:main2}]
Because $u,b\in BMO^{-1}(\R^2)$, we have
%(in view of the same discussion concerning boundedness of the Riesz transform at the start of the proof of Proposition \ref{prop:c0})
that $u = \nabla^{\perp}\phi$ and $b = \nabla^{\perp} \psi$ with $\phi,\psi\in BMO(\R^2)$. Thanks, to Proposition \ref{prop:stream}, we know that
\[
-\DD \psi + u\cdot \nabla \psi = 0;
\]
by multiplying the above equation by $(\psi - [\psi]_{,R})\varphi_R^2$ (where $\varphi_R$ is our usual rescaled cut-off function), and then integrating by parts, we get that
\begin{equation*}
    \int_{B(R)}|\nabla \psi|^2 \varphi_R^2 \dd x = \frac{1}{2}\int_{B(R)}|\psi - [\psi]_{,R}|^2\DD \varphi^2_R \dd x + 2\int_{B(R)}\varphi_R\nabla\psi\cdot\nabla^{\perp}\varphi_R (\psi - [\psi]_{,R})(\phi - [\phi]_{,R})\dd x;
\end{equation*}
this implies, thanks to Young's inequality,
%using the Young inequality to absorb terms into the left-hand side, that
\begin{align*}
    \int_{B(R/2)}|\nabla \psi|^2 \dd x &\leq c(\varphi)\avint_{B(R)}|\psi - [\psi]_{,R}|^2 \dd x + \frac{c(\varphi)}{R^2}\int_{B(R)}|\psi - [\psi]_{,R}|^2|\phi - [\phi]_{,R}|^2 \dd x\\
    &\leq c(\varphi)(\|\psi\|^2_{BMO(\R^2)} + \|\psi\|^2_{BMO(\R^2)}\|\phi\|^2_{BMO(\R^2)}).
\end{align*}
%where we also used the embedding of $BMO$ into $BMO^q$ due to the John--Nirenberg inequality.
Thus, by taking the limit $R\to \infty$, we get that $\nabla \psi \in L_2(\R^2)$, i.e., $b\in L_2(\R^2)$. By applying the first point of Theorem \ref{thm:main}, we conclude the proof.
\end{proof}

We prove now the first assertion in Theorem \ref{thm:main2}
\begin{proof}[Proof of the first point in Theorem \ref{thm:main2}]
We divide the proof into two steps. Let us also recall that, since $u \in BMO^{-1}(\R^2)$, we have that $u = \nabla^{\perp} \phi$ with $\phi \in BMO(\R^2)$.

\vspace{2.5mm}
\paragraph{\textbf{Step 1 (Pressure estimates).}} Let us start by noticing that, under our hypothesis, we have that:
\[
-\DD u + u\cdot\nabla u \in (\mathring{L}^1_2(\R^2))^*
\]
the dual of $\mathring{L}^1_2(\R^2)$ (the closure of $C^{\infty}_0(\R^2)$ with respect of the semi-norm $\|\nabla \cdot \|_{L_2(\R^2)}$). To see this, consider $\varphi \in C^{\infty}_0(\R^2)$; then, we have that
\begin{align*}
    \langle -\DD u_i + u\cdot \nabla u_i,\varphi \rangle &= \int_{\R^2}\nabla u_i \cdot \nabla \varphi \dd x - \int_{\R^2} \varphi \nabla u_i\cdot \nabla^{\perp} \varphi \dd x\\
    &\leq c\left(\|\nabla u\|_{L_2(\R^2)} + \|\varphi\|_{BMO(\R^2)}\|\nabla u\|_{L_2(\R^2)}\right)\|\nabla \varphi\|_{L_2(\R^2)}\quad(\forall i = 1,2),
\end{align*}
where, for the last inequality, we used the duality between $BMO$ and Hardy $\mathcal{H}^1$ spaces, and the div-curl lemma of Coifman, Lions, Meyer and Semmes (see for instance \cite[Theorem 4]{cmsHardy}). Consequently, we have that (see for instance \cite[Theorem II.8.2]{Galdi11}) there exists $F\in L_2(\R^2;\R^{2\times 2})$ such that
\begin{equation}\label{B3.1}
    -\DD u + u\cdot\nabla u = \dvg(F);
\end{equation}
in particular, we deduce that
\begin{equation}\label{B3.2}
    \nabla \pi \in (\mathring{L}^1_2(\R^2))^* + L_s(\R^2),
\end{equation}
with $1/s = 1/q + 1/2$. Next, let us introduce $\pi_1$ and $\pi_2$ defined as follow:
\[
\pi_1 := (-\DD)^{-1}\dvg\dvg(F)\quad\mbox{and}\quad \pi_2 := -(-\DD)^{-1}\dvg\dvg(b\otimes b),
\]
where $F$ is as in \eqref{B3.1}.
%and we recall that the notation for $(-\Delta)^{-1}$ was already introduced in \eqref{eq:v def approx psi in Step 3}.
We have that $\pi_1 \in L_2(\R^2)$ and $\pi_2\in L_{\frac{q}{2}}(\R^2)$ if $q\neq \infty$ or $\pi_2 \in BMO(\R^2)$ if $q = \infty$, and $\nabla \pi_2 \in L_s(\R^2)$ (with $1/s = 1/q + 1/2$). Going back to the first equation in \eqref{eq:mhd}, we get that
\[
-\DD(\pi - \pi_1 - \pi_2) = 0;
\]
but since $\nabla(\pi - \pi_1 - \pi_2) \in (\mathring{L}^1_2(\R^2))^* + L_s(\R^2)$ (see \eqref{B3.2} and definition of $\pi_1$ and $\pi_2$), we have that $\nabla(\pi - \pi_1 - \pi_2) \equiv 0$; therefore, without loss of generality
\begin{equation}
    \pi = \pi_1 + \pi_2.
\end{equation}

\vspace{2.5mm}
\paragraph{\textbf{Step 2.}} Similarly to what we did in `Step 2' of the proof of Theorem \ref{thm:main}, we obtain that (see \eqref{B.7}):
\begin{equation}\label{E4.7}
    |\nabla u|^2 + |\curl b|^2 = \DD\frac{|u|^2}{2} - u\cdot\nabla\left(\frac{|u|^2}{2} + \pi_1 + \pi_2 -\frac{|b|^2}{2}\right)\quad \mbox{in }\R^2.
\end{equation}
Before going any further, let us introduce the following special mean function:
\[
[u]_{\varphi,R} := \int_{B(R)}u(x)\varphi_R(x) \dd x\left( \int_{B(R)}\varphi_R(x)\dd x \right)^{-1},
\]
where $\varphi_R$ is our usual rescaled cut-off function. Now, we multiply $\varphi_R$ and integrate over the ball $B(R)$ to obtain
\begin{align*}
    &\int_{B(R/2)}\left( |\nabla u|^2 + |\curl b|^2\right)\dd x \leq \frac{1}{2}\int_{B(R)\setminus B(R/2)}|u|^2\DD \varphi_R \dd x\\
    & + \int_{B(R)\setminus B(R/2)}(\phi - [\phi]_{,R})\nabla (|u|^2/2)\cdot\nabla^{\perp} \varphi_R \dd x + \int_{B(R)\setminus B(R/2)} \pi_1 u\cdot\nabla \varphi_R \dd x\\
    & + \int_{B(R)\setminus B(R/2)}(\phi - [\phi]_{,R})\nabla \pi_2\cdot\nabla^{\perp} \varphi_R \dd x + \int_{B(R)\setminus B(R/2)}(\phi - [\phi]_{,R})\nabla (|b|^2/2)\cdot\nabla^{\perp} \varphi_R \dd x\\
    & =: \sum_{i = 1}^5 I_i(R).
\end{align*}
Our goal now is to estimate the $I_i$ terms. We have
\begin{align*}
    I_1(R) &= \frac{1}{2}\int_{B(R)\setminus B(R/2)}| u - [u]_{B(R)\setminus B(R/2)}|^2\DD \varphi_R \dd x\\
    &\phantom{=} + [u]_{B(R)\setminus B(R/2)}\cdot\int_{B(R)\setminus B(R/2)}(u - [u]_{B(R)\setminus B(R/2)})\DD \varphi_R \dd x, \\
    %\end{align*}
    %which, for any $\varepsilon >0$, is bounded above by
    %\begin{align*}
    &\leq \frac{c(\varepsilon,\varphi)}{R^2}\int_{B(R)\setminus B(R/2)}| u - [u]_{B(R)\setminus B(R/2)}|^2 \dd x + 2\varepsilon(\left|[u]_{B(R)\setminus B(R/2)} - [u]_{\varphi,R}\right|^2 + \left|[u]_{\varphi,R}\right|^2)\\
    &\quad \leq c(\varepsilon,\varphi)\int_{B(R)\setminus B(R/2)}|\nabla u|^2 \dd x + \varepsilon c(\varphi) \int_{B(R)}|\nabla u|^2 \dd x + \varepsilon\frac{c(\varphi)}{R^2}\|\phi\|^2_{BMO(\R^2)}.
\end{align*}
%\comme{need to say how to get final term on RHS}
Let us also point out that, for this last inequality, we used
\begin{equation}\label{eq:funny poincare}
    \begin{aligned}
    \left|[u]_{B(R)\setminus B(R/2)} - [u]_{\varphi,R}\right|^2 &\leq c(\varphi)\avint_{B(R)}|u - [u]_{B(R)\setminus B(R/2)}|^2 \dd x\\
    &\leq c(\varphi)\int_{B(R)}|\nabla u|^2 \dd x;
    \end{aligned}
\end{equation}
%where the first inequality is obtained by direct calculation using the expression for $[u]_{\varphi,R}$; the second inequality can, after rescaling to cater for the dependence in $R$, be proved by the same method as the standard Poincar\'{e} inequality on the unit ball, i.e., Rellich's compactness theorem and proof by contradiction.
and we bound the term $|[u]_{\varphi,R}|$ as follows:
\begin{align*}
    |[u]_{\varphi,R}| &= \left|\frac{1}{R^2\int_B \varphi(x)\dd x}\int_{B(R)}\nabla^{\perp}(\phi - [\phi]_{,R})\varphi_R \dd x\right|\\
    &\leq \frac{c(\varphi)}{R}\avint_{B(R)}|\phi - [\phi]_{,R}|\dd x\\
    &\leq \frac{c(\varphi)}{R}\|\phi\|_{BMO(\R^2)}.
\end{align*}

Next, we have
\begin{align*}
    I_2(R) &= \int_{B(R)\setminus B(R/2)}(\phi - [\phi]_{,R})\nabla (|u - [u]_{B(R)\setminus B(R/2)}|^2/2)\cdot\nabla \varphi_R \dd x\\
    & \phantom{=} + [u]_{B(R)\setminus B(R/2)}\cdot\int_{B(R)\setminus B(R/2)}(\phi - [\phi]_{,R})\nabla u\nabla^{\perp}\varphi_R \dd x\\
    & \leq c(\varphi)\left(\int_{B(R)\setminus B(R/2)}|\nabla u|^2 \dd x\right)^{\frac{1}{2}}\left(\avint_{B(R)}|\phi - [\phi]_{,R}|^4 \dd x\right)^{\frac{1}{4}}\times \\
    &\phantom{=}\times \left(\avint_{B(R)\setminus B(R/2)}|u - [u]_{B(R)\setminus B(B/2)}|^4 \dd x \right)^{\frac{1}{4}}\\
    &\phantom{=} + c(\varphi)\left|[u]_{B(R)\setminus B(B/2)}\right|\left(\int_{B(R)\setminus B(R/2)}|\nabla u|^2 \dd x\right)^{\frac{1}{2}}\left(\avint_{B(R)}|\phi - [\phi]_{,R}|^2 \dd x\right)^{\frac{1}{2}}\\
    & \leq c(\varphi)\|\phi\|_{BMO(\R^2)}\int_{B(R)\setminus B(R/2)}|\nabla u|^2 \dd x\\
    &\phantom{=} + c(\varphi)\|\phi\|_{BMO(\R^2)}\left|[u]_{B(R)\setminus B(B/2)}\right|\left(\int_{B(R)\setminus B(R/2)}|\nabla u|^2 \dd x\right)^{\frac{1}{2}};
\end{align*}
but as before, we deal with the term $\left|[u]_{B(R)\setminus B(B/2)}\right|$ in the following manner:
\begin{align}\label{E4.8}
\begin{split}
    \left|[u]_{B(R)\setminus B(B/2)}\right| &\leq \left|[u]_{B(R)\setminus B(R/2)} - [u]_{\varphi,R}\right| + \left|[u]_{\varphi,R}\right|\\
    &\leq c(\varphi)\left(\int_{B(R)}|\nabla u|^2 \dd x \right)^{\frac{1}{2}} + \frac{c(\varphi)}{R}\|\phi\|_{BMO(\R^2)}.
\end{split}
\end{align}
%i.e., as we did for \eqref{eq:funny poincare}.
Consequently, we have
\[
I_2(R) \to 0\quad\mbox{ as }R\to \infty.
\]

Next, we estimate
\begin{align*}
    I_3(R) &\leq c(\varphi)\left(\int_{B(R)\setminus B(R/2)}|\pi_1|^2 \dd x \right)^{\frac{1}{2}}\left( \avint_{B(R)\setminus B(R/2)}|u - [u]_{B(R)\setminus B(R/2)}|^2 \dd x\right)^{\frac{1}{2}}\\
    &\phantom{=} + c(\varphi)|[u]_{B(R)\setminus B(R/2)}|\left(\int_{B(R)\setminus B(R/2)}|\pi_1|^2 \dd x \right)^{\frac{1}{2}};
\end{align*}
therefore, because $\pi_1 \in L_2(\R^2)$ and thanks to \eqref{E4.8}, we get
\[
I_3(R) \to 0\quad\mbox{ as }R\to \infty.
\]
Finally, since $\nabla \pi_2, \nabla (|b|^2/2) \in L_s(\R^2)$ with $1/s = 1/q + 1/2$, we have that $I_4(R)$ and $I_5(R)$ can be treated in the same way:
\begin{align*}
    I_4(R) &\leq \frac{c(\varphi,q)}{R^{\frac{2}{q}}}\left(\int_{B(R)\setminus B(R/2)} |\nabla \pi_2|^s\dd x \right)^{\frac{1}{s}}\left(\avint_{B(R)}|\phi - [\phi]_{,R}|^{s'}\dd x \right)^{\frac{1}{s'}}\\
    &\phantom{=} \to 0\quad \mbox{as }R\to \infty.
\end{align*}
Summarising our efforts, we obtain that
\[
\int_{\R^2}\left(|\nabla u|^2 + |\curl b|^2\right)\dd x \leq \varepsilon c(\varphi)\int_{\R^2}|\nabla u|^2 \dd x,
\]
for all $\varepsilon>0$; and the proof is done.
\end{proof}

\section{Concluding remarks}
At this point in time, it is still unclear to the authors whether one has a Liouville-type theorem for system \eqref{eq:mhd} assuming only condition \eqref{setup} to hold or, as suggested by Proposition \ref{prop:c0}, assuming in addition to condition \eqref{setup} that either $u\in BMO^{-1}(\R^2)$ or $b\in BMO^{-1}(\R^2)$. This will be the object of future investigations.
%%%%%%%%%%%%%%%%%%%%%%%%%%%%%%%%%%%%%%%%%%%%%%%%%%%%%%%%%%%%%%%%%%%
\appendix

\section{Technical lemmas}
\label{sec:appendix}

For the reader's convenience, we present three technical results needed in the proof our main results.

First, we recall an inequality from  \cite[Appendix B, Eq. (B.9), (B.10) \& (B.12)]{P-Lions96}. %\cite[Appendix B, Theorem B.1]{MR1422251}.

\begin{lemma}\label{lm:lions}
Let $f\in L_{2,loc}(\R^2)$ with finite Dirichlet energy
\[
\int_{\R^2}|\nabla f|^2 \dd x <\infty.
\]
Then
\[
\int_{B(R)}|f|^m \dd x \leq c R^2(\log 2R)^{\frac{m}{2}}\left( \int_{\R^2}|\nabla f|^2\dd x + \int_{B(1)}|f|^2\dd x\right)^{\frac{m}{2}},
\]
for all $m,R\geq 1$ and $c>0$ and absolute constant.
\end{lemma}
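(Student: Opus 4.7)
The plan is to split $f$ into its oscillation around a suitable mean and the mean itself, to control the oscillation by a 2D Poincaré--Sobolev inequality, and to control the mean by a logarithmic estimate coming from radial integration in polar coordinates. Concretely, I would reduce to $f \in C^{\infty}(\R^2)$ by density and then write
\[
\int_{B(R)}|f|^m \dd x \leq C_m \left( \int_{B(R)} |f - [f]_{B(R)}|^m \dd x + R^2 \, |[f]_{B(R)}|^m \right),
\]
and treat the two pieces separately.

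For the oscillation, I would use the 2D Sobolev embedding $W^{1,2}(B(1)) \hookrightarrow L^m(B(1))$ (which holds for every finite $m$ in two dimensions) together with Poincaré's inequality on the unit ball, and then rescale to $B(R)$. A standard scaling argument yields
\[
\|f - [f]_{B(R)}\|_{L^m(B(R))} \leq C(m) R^{2/m} \|\nabla f\|_{L^2(B(R))},
\]
whose $m$-th power already matches the $R^2 \|\nabla f\|_{L^2}^m$ shape on the right-hand side of the lemma (the logarithmic factor is harmless for this piece).

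All the real work lies in showing the sharp mean estimate
\[
|[f]_{B(R)}| \leq c (\log 2R)^{1/2} \bigl( \|\nabla f\|_{L^2(\R^2)} + \|f\|_{L^2(B(1))} \bigr).
\]
For this I would pass to polar coordinates and work with the radial average $F(r) := \frac{1}{2\pi}\int_0^{2\pi} f(r,\theta) \dd \theta$. The identity $F(r) - F(r_0) = \int_{r_0}^r F'(\rho) \dd \rho$, together with the trick of inserting $\sqrt{\rho}/\sqrt{\rho}$ before applying Cauchy--Schwarz, yields
\[
|F(r) - F(r_0)|^2 \leq \log(r/r_0) \int_0^\infty \rho |F'(\rho)|^2 \dd \rho \leq c \log(r/r_0) \|\nabla f\|^2_{L^2(\R^2)},
\]
where the final bound exploits precisely the radial weight $\rho$ arising from the 2D area element, combined with the pointwise bound $|F'(\rho)|^2 \leq \frac{1}{2\pi}\int_0^{2\pi}|\partial_\rho f|^2 \dd \theta$. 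A pigeonhole on $r_0 \in [1/2, 1]$ produces a starting value $F(r_0)$ controlled by $\|f\|_{L^2(B(1))}$, and the representation $[f]_{B(R)} = (2/R^2) \int_0^R r F(r) \dd r$ then inherits the $(\log 2R)^{1/2}$ growth by direct integration.

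The main obstacle I expect is the \emph{sharpness} of the logarithmic factor: a naive dyadic telescoping of $|[f]_{B(2^k)} - [f]_{B(2^{k-1})}|$ bounded by $\|\nabla f\|_{L^2(B(2^k))}$ only gives a $\log R$ bound on the mean, which would degrade the lemma to $(\log 2R)^m$ on the right-hand side. The polar-coordinate argument, which crucially uses the 2D area weight $\rho \, \d \rho \, \d \theta$ to turn one power of $\log r$ into a square root via a single application of Cauchy--Schwarz, is what brings the exponent down from $1$ to $1/2$ and yields the inequality in the stated form.
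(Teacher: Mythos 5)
The paper does not prove this lemma at all: it is quoted directly from Lions' book \cite{P-Lions96}, so there is no in-paper argument to compare against. Your proposal reconstructs what is essentially the standard Gilbarg--Weinberger/Lions argument, and it is correct: the split into oscillation plus mean, the rescaled Sobolev--Poincar\'e inequality for the oscillation (whose scaling indeed produces exactly the factor $R^{2/m}$), and the polar-coordinate estimate for the radial average $F(r)$ -- inserting $\sqrt{\rho}/\sqrt{\rho}$ before Cauchy--Schwarz so that the area weight $\rho\,\d\rho$ is absorbed into $\|\nabla f\|_{L_2}^2$ and only a single $\sqrt{\log}$ survives -- are precisely the right ingredients, and the pigeonhole over $r_0\in[1/2,1]$ correctly anchors $F$ via $\|f\|_{L_2(B(1))}$. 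Two remarks. First, for $r<r_0$ the inequality should read $|F(r)-F(r_0)|^2\le |{\log(r/r_0)}|\int_0^\infty\rho|F'(\rho)|^2\,\d\rho$, and when you integrate $rF(r)$ over $[0,R]$ you need $\int_0^{r_0}(\log(r_0/r))^{1/2}\,r\,\d r<\infty$; this is immediate but should be said. Second, and more substantively, your proof yields a constant that necessarily depends on $m$: the two-dimensional embedding $W^{1,2}(B)\hookrightarrow L_m(B)$ has constant of order $\sqrt{m}$, so the oscillation term carries a factor of size $(c\sqrt{m})^{m}$, and this growth cannot be removed (for $f=(-\log|x|)^{\alpha}$ with $\alpha$ close to $1/2$, suitably truncated, the left-hand side grows like $\Gamma(\alpha m)$ while the right-hand side with an absolute $c$ grows only geometrically in $m$). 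So the lemma as literally stated, with $c$ an absolute constant independent of $m$, is not attainable; the correct version -- which your argument proves, and which is what Lions states -- has $c=c(m)$. This discrepancy is harmless for the paper, which only invokes the lemma for small fixed values of $m$.
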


Secondly, we state an oscillation lemma inspired from \cite[Theorem 4.2]{Ser12_2}; although our statement of this result is slightly different, the idea of the proof is the same. With the aim of keeping the paper self-contained, we also provide a proof.
\begin{lemma}[Oscillation lemma] \label{lm:oscillation}
Let $v\in C(\overline{B})\cap H^1(B)$ such that for any $r\in ]0,1[$ the following maximum principle holds in $B(r)$:
\[
\max_{B(r)} v = \max_{\partial B(r)} v, \quad \min_{B(r)} v = \min_{\partial B(r)} v.
\]
Then,
\[
\sup_{x\in B(r)}|v(x) - v(0)| \leq \frac{c}{\sqrt{-\log r}}\|\nabla v\|_{L_2(B\setminus B(r))},
\]
for all $r\in ]0,1[$ and with $c>0$ an absolute constant.
\end{lemma}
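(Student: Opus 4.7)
The plan is to exploit the maximum principle hypothesis to reduce control of $|v(x)-v(0)|$ on $B(r)$ to an oscillation estimate on each circle $\partial B(s)$ with $s\in[r,1]$, and then to couple a one-dimensional Sobolev-type bound on those circles with a radial integration in $s$ to extract the $\sqrt{-\log r}$ factor.

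\emph{Step 1 (Reduction via the maximum principle).} Since $0\in B(r)\subset B(s)$ for every $s\in[r,1)$, the assumed maximum and minimum principles yield the pointwise bound
\[
\sup_{x\in B(r)}|v(x)-v(0)| \leq \max_{B(s)} v - \min_{B(s)} v = \max_{\partial B(s)} v - \min_{\partial B(s)} v.
\]

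\emph{Step 2 (One-dimensional Sobolev bound on each circle).} Parameterising $\partial B(s)$ by $\theta\in[0,2\pi]$ via $x=(s\cos\theta,s\sin\theta)$ and writing $\tilde v(\theta) := v(s\cos\theta, s\sin\theta)$, I would apply the fundamental theorem of calculus between the angles at which $\tilde v$ attains its maximum and minimum, followed by the Cauchy--Schwarz inequality, to obtain, for a.e.\ $s\in(r,1)$,
\[
\bigl(\max_{\partial B(s)}v-\min_{\partial B(s)}v\bigr)^2 \leq 2\pi \int_0^{2\pi}|\partial_\theta \tilde v(\theta)|^2 \dd\theta \leq 2\pi\,s\int_{\partial B(s)}|\nabla v|^2\dd\mathcal{H}^1,
\]
where I use the pointwise bound $|\partial_\theta \tilde v|\leq s|\nabla v|$ together with $\dd\mathcal{H}^1 = s\dd\theta$ on $\partial B(s)$. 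The existence of traces on $\partial B(s)$ and the use of the angular fundamental theorem of calculus are valid for a.e.\ $s$ by a standard density argument, given $v\in C(\overline B)\cap H^1(B)$.

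\emph{Step 3 (Radial integration and conclusion).} Dividing by $s$ and integrating in $s$ over $[r,1]$, the polar decomposition of Lebesgue measure turns the right-hand side into $2\pi\,\|\nabla v\|_{L_2(B\setminus B(r))}^2$, while on the left the $s$-independent lower bound from Step~1 gives
\[
\bigl(\sup_{x\in B(r)}|v(x)-v(0)|\bigr)^2 \int_r^1 \frac{\dd s}{s} \leq 2\pi\,\|\nabla v\|_{L_2(B\setminus B(r))}^2.
\]
Since $\int_r^1 s^{-1}\dd s = -\log r$, taking square roots yields the stated estimate with $c=\sqrt{2\pi}$.

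I do not foresee a serious obstacle: the argument is essentially a classical radial-averaging trick coupled with the maximum principle. The only mild technicality is justifying the angular fundamental theorem of calculus for a.e.\ $s\in(r,1)$, which is routine given the continuity and $H^1$-regularity of $v$.
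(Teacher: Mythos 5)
Your proposal is correct and follows essentially the same route as the paper's proof: a one-dimensional oscillation bound on each circle $\partial B(s)$ via the fundamental theorem of calculus and Cauchy--Schwarz, the lower bound $|\nabla v|^2 \ge s^{-2}|\partial_\theta v|^2$, integration of $\d s/s$ over $[r,1]$ to produce the $-\log r$ factor, and the maximum principle to pass from $\mathrm{osc}_{\partial B(s)} v$ down to the oscillation on $B(r)$ (you invoke the maximum principle at the start rather than at the end, but the argument and the constant $\sqrt{2\pi}$ are identical).
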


\begin{proof}
For $r \in ]0,1[$, using radial and angular coordinates, we write
\begin{align}\label{eq:nabla2}
\begin{split}
    \int_{B \setminus B(r)} |\nabla v|^2 \dd x &= \int_r^{1}\int_0^{2\pi}|\nabla v(s,\theta)|^2 s \dd \theta \dd s \\
    &= \int_{r}^1 \int_0^{2\pi} \left[|\partial_s v|^2 + \frac{1}{s^2}|\partial_\theta v|^2\right] s \dd \theta \dd s \\
    &\ge \int_r^1 \frac{1}{s} \int_0^{2\pi} |\partial_\theta v(s,\theta)|^2 \dd \theta \dd s.
    \end{split}
\end{align}

On the other hand, by the fundamental theorem of calculus,
\begin{align*}
    v(s,\theta_1) - v(s,\theta_2) = \int_{\theta_1}^{\theta_2} \partial_\theta v(s,\theta) \dd \theta ,
\end{align*}
which implies (using the Cauchy-Schwarz inequality)
\begin{align*}
    |v(s,\theta_1) - v(s,\theta_2)| &\le \int_{0}^{2\pi} |\partial_\theta v(s,\theta)| \dd \theta \\
    &\le \sqrt{2\pi} \left(\int_0^{2\pi} |\partial_\theta v(s,\theta)|^2 \dd \theta\right)^{1/2}.
\end{align*}
Hence
\begin{align*}
    \max_{\theta_1 \in [0,2\pi[} v(s,\theta_1) -    \min_{\theta_2 \in [0,2\pi[} v(s,\theta_2) &\le \sqrt{2\pi}\left(\int_0^{2\pi} |\partial_\theta v(s,\theta)|^2 \dd \theta\right)^{1/2},
\end{align*}
which yields
\begin{align*}
    \mathrm{osc}_{\partial B(s)} v
   &\le \sqrt{2\pi}\left(\int_0^{2\pi} |\partial_\theta v(s,\theta)|^2 \dd \theta\right)^{1/2}.
\end{align*}
Using \eqref{eq:nabla2}, we deduce
\begin{align*}
    \int_r^1 (\mathrm{osc}_{\partial B(s)} v)^2 \frac{\dd s}{2\pi s} \le \int_{B \setminus B(r)} |\nabla v|^2 \dd x.
\end{align*}
We observe that $\mathrm{osc}_{\partial B(s)} v \ge \mathrm{osc}_{\partial B(r)}v \ge 0$ for $s \in [r,1]$. As a result,
\begin{align*}
    \int_{B \setminus B(r)} |\nabla v|^2 \dd x  &\ge \frac{1}{2\pi}\int_r^1 \frac{1}{s}  (\mathrm{osc}_{\partial B(s)} v)^2 \dd s \\
    &\ge \frac{(\mathrm{osc}_{\partial B(r)} v)^2}{2\pi}\int_r^1 \frac{1}{s}   \dd s
    \\&= \frac{1}{2\pi} (-\log r)(\mathrm{osc}_{\partial B(r)} v)^2.
\end{align*}
Once again, by the maximum principle, we have that $\mathrm{osc}_{B(r)} v = \mathrm{osc}_{\partial B(r)} v$, so
\begin{align*}
\mathrm{osc}_{B(r)} v
\leq \frac{\sqrt{2\pi}}{\sqrt{-\log r}}\|\nabla v\|_{L_2(B\setminus B(r))},
\end{align*}
which concludes the proof.

\end{proof}

Finally, we provide a technical lemma which was needed in the proof of Theorem \ref{thm:main}.

\begin{lemma}\label{lemma:construction}
Suppose $\psi_1,\psi_2$ are smooth functions such that $\nabla \psi_1 \in L_1(\mathbb{R}^2)$ and $\nabla \psi_2 \in L_\infty(\mathbb{R}^2)$. Then there exists a distributional solution $v \in L_\infty(\mathbb{R}^2)$ with $\nabla v\in  L_2(\mathbb{R}^2)$ to the equation
\begin{equation}\label{eq:laplace v eqn appendix}
    -\Delta v = \nabla^\perp \psi_1 \cdot \nabla \psi_2 \qquad \text{in } \mathbb{R}^2.
\end{equation}
Moreover, there exists a positive constant $c$ such that
\begin{equation}\label{eq:v bounds appendix}
    \Vert v \Vert_{L_\infty(\mathbb{R}^2)} + \Vert \nabla v \Vert_{L_2(\mathbb{R}^2)} \leq c \Vert \nabla \psi_1 \Vert_{L_1(\mathbb{R}^2)} \Vert \nabla \psi_2 \Vert_{L_\infty(\mathbb{R}^2)}.
\end{equation}
\end{lemma}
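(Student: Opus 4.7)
The plan is to construct $v$ explicitly via the Newton potential, exploiting the divergence-free structure of $\nabla^\perp\psi_1$. Since $\nabla^\perp\psi_1\cdot\nabla\psi_2=\mathrm{div}(\psi_2\nabla^\perp\psi_1)$, a formal integration by parts with the 2D Green function $G(x)=-\frac{1}{2\pi}\log|x|$ suggests the representation
\begin{equation*}
v(x)=\frac{1}{2\pi}\int_{\R^2}\frac{y-x}{|y-x|^2}\cdot\bigl(\psi_2(y)-\psi_2(x)\bigr)\nabla^\perp\psi_1(y)\dd y,
\end{equation*}
where $\psi_2(x)$ is subtracted in order to compensate for the lack of integrability of $\psi_2$ at infinity. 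The crucial feature is that the Lipschitz estimate $|\psi_2(y)-\psi_2(x)|\leq\|\nabla\psi_2\|_{L_\infty}|y-x|$ exactly cancels the singularity of the kernel, yielding $|v(x)|\leq c\,\|\nabla\psi_2\|_{L_\infty}\|\nabla\psi_1\|_{L_1}$ uniformly in $x$.

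To make this rigorous I would approximate $\psi_1$ by the compactly supported smooth functions $\psi_{1,n}:=\chi_n(\psi_1-c_n)$, where $\chi_n$ is a standard cutoff supported in $B(2n)$ with $\chi_n\equiv 1$ on $B(n)$, and $c_n:=[\psi_1]_{B(2n)\setminus B(n)}$. A Poincar\'e inequality on the annulus, combined with the vanishing of the tails of $\nabla\psi_1\in L_1$, yields $\nabla\psi_{1,n}\to\nabla\psi_1$ in $L_1(\R^2)$. For these compactly supported data the Newton potential $v_n:=G\ast(\nabla^\perp\psi_{1,n}\cdot\nabla\psi_2)$ is a well-defined smooth function, and a (now fully justified) integration by parts in $y$ delivers the representation
\begin{equation*}
v_n(x)=\frac{1}{2\pi}\int_{\R^2}\frac{y-x}{|y-x|^2}\cdot\psi_2(y)\nabla^\perp\psi_{1,n}(y)\dd y.
\end{equation*}
Moreover, $\int\frac{y-x}{|y-x|^2}\cdot\nabla^\perp\psi_{1,n}(y)\dd y=0$ because $\nabla^\perp\psi_{1,n}$ is divergence-free and compactly supported (the singular contribution at $y=x$ is of order $\varepsilon\log(1/\varepsilon)\to 0$), so $\psi_2(x)$ can be freely subtracted inside the integral, producing the displayed representation with $\psi_1$ replaced by $\psi_{1,n}$ and hence the uniform estimate $\|v_n\|_{L_\infty}\leq c\,\|\nabla\psi_{1,n}\|_{L_1}\|\nabla\psi_2\|_{L_\infty}$.

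For the $L_2$-bound on the gradient I would test the equation $-\Delta v_n=\nabla^\perp\psi_{1,n}\cdot\nabla\psi_2$ against $v_n$ itself. Since the right-hand side is compactly supported and has zero mean (directly from the divergence structure), a standard multipole expansion gives $|v_n(x)|\lesssim|x|^{-1}$ and $|\nabla v_n(x)|\lesssim|x|^{-2}$ at infinity, which ensures $\nabla v_n\in L_2(\R^2)$ and that the boundary terms in the integration by parts on large balls vanish. Combined with the $L_\infty$-bound this gives
\begin{equation*}
\|\nabla v_n\|_{L_2}^2=\int_{\R^2}v_n(\nabla^\perp\psi_{1,n}\cdot\nabla\psi_2)\dd x\leq\|v_n\|_{L_\infty}\|\nabla\psi_{1,n}\|_{L_1}\|\nabla\psi_2\|_{L_\infty}\leq c\,\|\nabla\psi_{1,n}\|_{L_1}^2\|\nabla\psi_2\|_{L_\infty}^2,
\end{equation*}
uniformly in $n$. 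Finally I would extract a subsequence with $v_n\weakstar v$ in $L_\infty(\R^2)$ and $\nabla v_n\weak\nabla v$ in $L_2(\R^2)$, and pass to the limit in the distributional formulation (using $\nabla\psi_{1,n}\to\nabla\psi_1$ in $L_1$ and the fact that $\nabla\psi_2\,\phi\in L_\infty$ for any test function $\phi$) to obtain a solution $v$ of \eqref{eq:laplace v eqn appendix} satisfying \eqref{eq:v bounds appendix}. The main technical obstacle throughout is the absence of decay of $\psi_2$ at infinity: this is precisely what forces the subtraction of $\psi_2(x)$ in the integral representation and necessitates the compact-support approximation in order to legitimize the integrations by parts.
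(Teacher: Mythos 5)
Your proof is correct, but it reaches the crucial $L_\infty$ bound by a genuinely different mechanism than the paper. The paper truncates \emph{both} $\psi_1$ and $\psi_2$, and estimates $v^n(0)$ via a Wente-type computation in polar coordinates: an integration by parts in the radial variable followed by a Poincar\'e--Wirtinger inequality on circles, pairing $\partial_\theta\hat\psi^n_1\in L_1$ against $\partial_\theta\hat\psi^n_2\in L_\infty$. You instead exploit the divergence structure $\nabla^\perp\psi_1\cdot\nabla\psi_2=\operatorname{div}(\psi_2\nabla^\perp\psi_1)$ to integrate by parts once against the logarithmic kernel, use the zero-circulation identity $\int\frac{y-x}{|y-x|^2}\cdot\nabla^\perp\psi_{1,n}(y)\,\mathrm{d}y=0$ to subtract $\psi_2(x)$, and then let the Lipschitz bound $|\psi_2(y)-\psi_2(x)|\leq\|\nabla\psi_2\|_{L_\infty}|y-x|$ cancel the $|y-x|^{-1}$ singularity pointwise. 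This is more elementary and arguably cleaner in the specific $L_1\times L_\infty$ setting of the lemma (it also lets you leave $\psi_2$ untruncated, so the passage to the limit on the right-hand side is a direct $L_1$--$L_\infty$ pairing rather than the paper's argument that the truncated data agree with the original on the support of the test function for $n$ large); the price is that your cancellation relies specifically on the Lipschitz character of $\psi_2$, whereas the paper's polar-coordinate argument is the one that generalizes to the classical Wente regime $\nabla\psi_1,\nabla\psi_2\in L_2$. For the gradient bound the two arguments are essentially the same energy estimate, yours justified by the multipole decay of $v_n$ (valid because the compactly supported source has zero mean), the paper's by a cutoff $\varphi_R$; both are fine.
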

\begin{proof}
Recall our usual cut-off function $\varphi \in C^{\infty}_0(\R^2)$ such that $0\leq \varphi \leq 1$ with $\varphi \equiv 1$ in $B(1/2)$ and $\varphi \equiv 0$ in $B\setminus B(3/4)$. Now define, for each $n\in\mathbb{N}$, the rescaled cut-off function $\varphi_n(x) := \varphi(x/n)$, and
\begin{equation*}
    \hat{\psi}^n_1 := (\psi_1 - [\psi_1]_{,n}) \varphi_n , \qquad \hat{\psi}^n_2 := (\psi_2 - [\psi_2]_{,n}) \varphi_n.
\end{equation*}
%Due to the compact support of $\varphi_n$ and the local boundedness of $\psi_1,\psi_2$ and their derivatives, inherited from their smoothness, we have that $\nabla \hat{\psi}_1^n , \nabla \hat{\psi}^2_n \in L_2(\mathbb{R}^2)$, say, for each $n\in\mathbb{N}$. It follows from the div-curl lemma of Coifman, Lions, Meyer and Semmes (\textit{cf.}~\cite[Theorem 4]{cmsHardy}) that $\nabla^\perp \hat{\psi}^n_1 \cdot \nabla \hat{\psi}^n_2$ belongs to the Hardy space $\mathcal{H}^1(\mathbb{R}^2)$ for every $n\in\mathbb{N}$, whence
We have that
\begin{equation*}
    v^n := (-\Delta)^{-1}(\nabla^\perp \hat{\psi}^n_1 \cdot \nabla \hat{\psi}^n_2)
\end{equation*}
is well-defined and
%, as in Step 3 of the proof of Theorem \ref{thm:main}. Moreover, using the same strategy as in Step 3 of the proof of Theorem \ref{thm:main}, we again recover $v^n \in L_\infty(\mathbb{R}^2)$ and $\nabla v^n \in L_2(\mathbb{R}^2)$ for each $n\in\mathbb{N}$, and
\begin{equation}\label{eq:equation laplace vn}
    -\Delta v^n = \nabla^\perp \hat{\psi}^n_1 \cdot \nabla \hat{\psi}^n_2 \qquad \text{in } \mathbb{R}^2.
\end{equation}

Our aim is now to obtain estimates on $v^n$ that are independent of $n$. We begin by observing that
\begin{equation}\label{eq:unif psi 1 bound in L1}
    \begin{aligned}
        \Vert \nabla \hat{\psi}^n_1 \Vert_{L_1(\mathbb{R}^2)} &\leq \Vert \nabla \psi_1 \Vert_{L_1(\mathbb{R}^2)} + \frac{c(\varphi)}{n} \int_{B(n)} |\psi_1 - [\psi_1]_{,n}| \, \d x \\
        &\leq c(\varphi) \Vert \nabla \psi_1 \Vert_{L_1(\mathbb{R}^2)},
    \end{aligned}
\end{equation}
where we used the triangle inequality and the boundedness of $\varphi$ and its derivative to obtain the first inequality, and the Poincar\'{e}--Wirtinger inequality on a ball to obtain the second inequality. Similarly, we have that
\begin{equation*}
    \Vert \nabla \hat{\psi}^n_2 \Vert_{L_\infty(\mathbb{R}^2)} \leq \Vert \nabla \psi_2 \Vert_{L_\infty(\mathbb{R}^2)} + \frac{c(\varphi)}{n}\sup_{x \in B(n)} \bigg| \psi_2(x) - \avint_{B(n)} \psi_2(y) \, \d y \bigg|,
\end{equation*}
where we used once again the triangle inequality and the boundedness of $\varphi$ and its derivative. Then, due to the bound on the gradient of $\psi_2$, we have that
%\begin{equation*}
%    |\psi_2(y)| \leq |\psi_2(0)| + \Vert \nabla \psi_2 \Vert_{L_\infty(\mathbb{R}^2)}|y|  \qquad \forall y \in \mathbb{R}^2.
%\end{equation*}
%Hence,
%\begin{equation*}
%    \frac{1}{|B_n|}\int_{B_n} |\psi_2(y)| \, \d y \leq |\psi_2(0)| + n \Vert \nabla \psi_2 \Vert_{L_\infty(\mathbb{R}^2)},
%\end{equation*}
%and, using the Lipschitz bound again to estimate the first term on the left-hand side below, it follows that
\begin{align*}
    \frac{1}{n}\sup_{x \in B(n)} \bigg| \psi_2(x) - \frac{1}{|B(n)|}\int_{B(n)} \psi_2(y) \, \d y \bigg| &\leq \frac{1}{n}\sup_{x\in B(n)}\avint_{B(n)}|\psi_2 (x) - \psi_2(y)|\dd y \\
    &\leq c\|\nabla \psi_2\|_{L_{\infty}(\R^2)}, \qquad \forall n \in \mathbb{N},
\end{align*}
whence,
\begin{equation}\label{eq:unif psi 2 bound in Linfty}
    \Vert \nabla \hat{\psi}^n_2 \Vert_{L_\infty(\mathbb{R}^2)} \leq c(\varphi) \Vert \nabla \psi_2 \Vert_{L_\infty(\mathbb{R}^2)}, \qquad \forall n \in \mathbb{N}.
\end{equation}
%for some positive constant $c$ independent of $n$.
Hence, the sequence $\{\nabla \hat{\psi}^n_1\}_{n\in\mathbb{N}}$ is uniformly bounded in $L_1(\mathbb{R}^2)$, and $\{\nabla \hat{\psi}^n_2\}_{n\in\mathbb{N}}$ is uniformly bounded in $L_\infty(\mathbb{R}^2)$.

In what follows, we directly estimate $v^n$ using its singular integral representation; these calculations are inspired from similar estimates due to Wente in \cite{wente}. By rewriting the right-hand side of \eqref{eq:equation laplace vn} in polar coordinates $(s,\theta)$, we find that
\begin{equation*}
    \nabla^\perp \hat{\psi}^n_1 \cdot \nabla \hat{\psi}^n_2 = \frac{1}{s}\partial_s (\hat{\psi}^n_1 \partial_\theta \hat{\psi}^n_2) - \frac{1}{s}\partial_\theta(\hat{\psi}^n_1 \partial_s \hat{\psi}^n_2),
\end{equation*}
and hence, using the periodicity with respect to the angular variable,
%to simplify the expansion in terms of the Poisson kernel
we get that the expression for $v^n(0)$ reads as
\begin{equation*}
    v^n(0) = - \frac{1}{2\pi}  \int_0^{2\pi} \bigg(\int_{0}^\infty \log s~ \partial_s (\hat{\psi}^n_1 \partial_\theta \hat{\psi}^n_2) \, \d s \bigg) \, \d\theta,
\end{equation*}
where we also applied the Fubini-Tonelli theorem to change the order of integration. In turn, after an integration by parts, this becomes
\begin{equation*}
    v^n(0) = \frac{1}{2\pi}  \int_0^{2\pi} \bigg(\int_{0}^\infty  \hat{\psi}^n_1 \frac{1}{s} \partial_\theta \hat{\psi}^n_2 \, \d s \bigg) \, \d\theta;
\end{equation*}
note that the boundary terms have vanished since
\[
|\log{s}~ \hat{\psi}^n_1 \partial_\theta \hat{\psi}^n_2| \leq \|\hat{\psi}^n_1\|_{L_{\infty}(\R^2)} \|\nabla \hat{\psi}^n_2\|_{L_{\infty}(\R^2)} |s \log s|\to 0\qquad \mbox{as }s\to 0^+,
\]
and because $\hat{\psi}^n_1$ and $\hat{\psi}^n_2$ are compactly supported. Using once again the Fubini-Tonelli theorem and the periodicity with respect to the angular variable in the $\partial_\theta \hat{\psi}^n_2$ portion of the integrand, the previous expression may be rewritten as
\begin{equation*}
    v^n(0) = \frac{1}{2\pi} \int_{0}^\infty \frac{1}{s} \bigg[\int_0^{2\pi}  \big( \hat{\psi}^n_1 - \frac{1}{2\pi}\int_0^{2\pi} \hat{\psi}^n_1(s,\alpha) \, \d \alpha \big)
    \partial_\theta \hat{\psi}^n_2 \, \d\theta\bigg] \, \d s.
\end{equation*}
In turn,
\begin{equation*}
    \begin{aligned}
        |v^n(0)| &\leq \frac{1}{2\pi} \Vert \nabla \hat{\psi}^n_2 \Vert_{L_\infty(\mathbb{R}^2)} \int_{0}^\infty \int_0^{2\pi} \bigg| \hat{\psi}^n_1 - \frac{1}{2\pi}\int_0^{2\pi} \hat{\psi}^n_1(s,\alpha) \, \d \alpha \bigg| \, \d s \, \d\theta \\
        &\leq c \Vert \nabla \hat{\psi}^n_2 \Vert_{L_\infty(\mathbb{R}^2)} \int_{0}^\infty \int_0^{2\pi} | \partial_\theta \hat{\psi}^n_1| \, \d s \, \d\theta \\
        &\leq c \Vert \nabla \hat{\psi}^n_2 \Vert_{L_\infty(\mathbb{R}^2)} \Vert \nabla \hat{\psi}^n_1 \Vert_{L_1(\mathbb{R}^2)},
    \end{aligned}
\end{equation*}
where we used the Poincar\'{e}-Wirtinger inequality to obtain the second inequality. By translation and using the uniform estimates \eqref{eq:unif psi 1 bound in L1} and \eqref{eq:unif psi 2 bound in Linfty}, we obtain
\begin{equation}\label{eq:unif vn bound Linfty}
    \Vert v^n \Vert_{L_\infty(\mathbb{R}^2)} \leq c \Vert \nabla \psi_2 \Vert_{L_\infty(\mathbb{R}^2)} \Vert \nabla \psi_1 \Vert_{L_1(\mathbb{R}^2)},
\end{equation}
for some positive constant $c$, independent of $n$. Additionally, by testing \eqref{eq:equation laplace vn} with $v^n \varphi_R$, where $\varphi_R$ is our usual rescaled cut-off function, we obtain that
\begin{equation*}
    \int_{\mathbb{R}^2} |\nabla v^n|^2 \varphi_R \, \d x = \frac{1}{2} \int_{\mathbb{R}} |v^n|^2 \Delta \varphi_R \, \d x + \int_{\mathbb{R}^2} (\nabla^\perp \hat{\psi}^n_1 \cdot \nabla \hat{\psi}^n_2) v^n \varphi_R \, \d x.
\end{equation*}
We thereby deduce from the uniform estimates \eqref{eq:unif psi 1 bound in L1}-\eqref{eq:unif vn bound Linfty} that there exists a positive constant $c$ independent of $n$ such that
\begin{equation}\label{eq:unif vn bound gradient L2}
    \Vert \nabla v^n \Vert_{L_2(\mathbb{R}^2)} \leq c \Vert \nabla \psi_2 \Vert_{L_\infty(\mathbb{R}^2)} \Vert \nabla \psi_1 \Vert_{L_1(\mathbb{R}^2)}.
\end{equation}
By the Banach-Alaoglu theorem, the uniform bounds \eqref{eq:unif vn bound Linfty} and \eqref{eq:unif vn bound gradient L2} imply the existence of a subsequence (which we still label as $\{v^n\}_{n\in\mathbb{N}}$) and a function $v$ such that
%such that $v^n \overset{*}{\rightharpoonup} v$ in $L_\infty(\mathbb{R}^2)$ and $\nabla v^n \rightharpoonup \nabla v$ in $L_2(\mathbb{R}^2)$.
%for some $v \in L_\infty(\mathbb{R}^2)$ and $v_* \in L_2(\mathbb{R}^2)$. However, by testing against any arbitrary smooth compactly supported test function and invoking the fundamental theorem of the calculus of variations, one observes that $v_* = \nabla v$ in the sense of distributions.
%To summarise, there exists $v \in L_\infty(\mathbb{R}^2)$ with $\nabla v \in L_2(\mathbb{R}^2)$ such that
\begin{equation}\label{eq:weak convergences}
    v^n \overset{*}{\rightharpoonup} v \quad \text{in } L_\infty(\mathbb{R}^2), \qquad \nabla v^n \rightharpoonup \nabla v \quad \text{in } L_2(\mathbb{R}^2).
\end{equation}
Let us return now to the equation \eqref{eq:equation laplace vn}. We introduce the following test function $\zeta \in C^\infty_0(\mathbb{R}^2)$; there exists $R>0$ such that $\supp \zeta \subset B(R)$. We find that
\begin{equation*}
    \int_{\mathbb{R}^2} \nabla v^n \cdot \nabla \zeta \, \d x = \int_{\mathbb{R}^2} (\nabla^\perp \psi_1 \cdot \nabla \psi_2) \zeta \, \d x
\end{equation*}
for all $n \geq 2 R + 1$. We note that arguing in this way avoids having to directly consider the product of two weakly convergent sequences; on the right-hand side of the above.

Using \eqref{eq:weak convergences} to pass to the limit in the previous equation, we find that
\begin{equation*}
    \int_{\mathbb{R}^2} \nabla v \cdot \nabla \zeta \, \d x = \int_{\mathbb{R}^2} (\nabla^\perp \psi_1 \cdot \nabla \psi_2) \zeta \, \d x \qquad \forall \zeta \in C^\infty_0(\mathbb{R}^2),
\end{equation*}
and hence $v$ solves \eqref{eq:laplace v eqn appendix} in the sense of distributions. Estimate \eqref{eq:v bounds appendix} follows from the convergences \eqref{eq:weak convergences}, estimates \eqref{eq:unif vn bound Linfty}-\eqref{eq:unif vn bound gradient L2} and the weak (resp.~weak-*) lower semicontinuity of the norms.
\end{proof}

%\begin{remark}
%A priori the passage to the limit in equation \eqref{eq:laplace v eqn appendix} involves the product of two weakly convergent sequences. However, the previous strategy, based on the fact that we already know how the sequence $\{v^n\}_{n\in\mathbb{N}}$ converges, avoids additional unpleasant technicalities.
%\end{remark}

\vspace{5mm}
\section*{Acknowledgements}
N.~De Nitti was partially supported by the Alexander von Humboldt foundation and by the TRR-154 project of the Deutsche Forschungsgemeinschaft. F.~Hounkpe was funded by the Engineering and Physical Sciences Research Council [EP/L015811/1]. S.~Schulz was supported by the Royal Society [RGF/EA/181043]. We thank G.~M.~Coclite, G.~Seregin, and E.~Zuazua for helpful conversations on topics related to this work.

\vspace{5mm}

%\bibliographystyle{abbrv}
%\bibliography{LiouvilleMHD-ref.bib}

\vfill

\end{document}